\documentclass[11pt, a4paper, oneside,leqno]{article}
\usepackage{amsmath, amsfonts, amsthm,dsfont,enumerate}
\newtheorem{mylem}{Lemma}
\newtheorem{mythe}{Theorem}
\newtheorem{myrem}{Remark}
\newtheorem{cor}{Corollary}
\newtheorem{pro}{Proposition}

\title{Generalised cosine functions, basis and \\ regularity properties}

\author{Lyonell Boulton$^1$ \quad \& \quad Houry Melkonian$^2$ \\ \ \\ {\small \it Department of Mathematics and} \\
{\small \it Maxwell Institute for Mathematical Sciences} \\ {\small \it Heriot-Watt University, Edinburgh EH14 4AS, UK.}}
\date{3rd November 2015}

\begin{document}
\maketitle
\footnotetext[1]{Email address: \texttt{L.Boulton@hw.ac.uk}}
\footnotetext[2]{Email address: \texttt{hm189@hw.ac.uk}}

\begin{abstract}
We examine regularity and basis properties of the family of rescaled $p$-cosine functions. We find sharp estimates for their Fourier coefficients. We then determine two thresholds, $p_0<2$ and $p_1>2$, such that this family is a Schauder basis of $L_s(0,1)$ for all   $s>1$ and $p\in[p_0,p_1]$. 
\end{abstract}


\section{Introduction} \label{sec1}
The contents of this paper can be summarised as follows. Consider a continuous 2-periodic function $f:\mathbb{R} \longrightarrow \mathbb{C}$. Denote by $\mathcal{F}$ the family of rescalings $\mathcal{F}=\{f(nx)\}_{n\in \mathbb{N}}$. When does $\mathcal{F}$ form a Schauder basis of $L_s\equiv L_s(0,1)$ for all $s>1$? 
This question can be traced back to a 1945 note by Arne Beurling \cite{Beurling1989}. However,
quite remarkably, there are still a number of open problems associated to it. As it turns, finding a concrete answer can be extremely difficult, even for apparently simple functions $f$.

In a series of recent papers the above question has been addressed for the particular choice $f(x)=\sin_p(\pi_p x)$, the $p$-sine functions. Let $p>1$. Let the increasing function $F_p:[0, 1] \longrightarrow [0,\frac{\pi_p}{2}]$ be defined by means of the integral
\begin{equation} \label{inversepsine}
 F_p(y):=\int_0^y (1-t^p)^{-\frac{1}{p}}\mathrm{d}t
\end{equation}
where
\[
 \pi_p:=2F_p(1)=\frac{2 \pi}{p \sin(\frac{\pi}{p})}.
\]
Denote the inverse of $F_p$ by $\sin_p$, which is increasing in the segment $[0,\frac{\pi_p}{2}]$. Extend to the whole of $\mathbb{R}$ by means of the rules
\begin{equation} \label{sineodd}
    \sin_p(-x)=-\sin_p(x) \qquad \text{and} \qquad \sin_p\left(\frac{\pi_p}{2}-x\right)=\sin_p\left(\frac{\pi_p}{2}+x\right),
\end{equation}
which makes it $2\pi_p$-periodic and continuous in $\mathbb{R}$. The choice $p=2$ corresponds to the standard trigonometric setting $\sin_2\equiv \sin$, $\pi_2=\pi$ and in this case $\mathcal{F}$ is a Schauder basis of $L_s$ for all $s>1$ as a consequence of  Fourier's Theorem.  

The study of the $p$-sine functions originated in the context of the one-dimensional $p$-Laplacian non-linear eigenvalue problem and dates back to the work of Elbert \cite{1979Elbert} and {\^O}tani \cite{1984Otani}. Their basis properties were first examined in \cite{BBCDG2006}, where it was announced that the family $\{\sin_p(n\pi_p\, \cdot)\}_{n\in \mathbb{N}}$ forms a Schauder basis of $L_s$ for all $s>1$ and $p\geq \frac{12}{11}$.  Further development in this respect were  settled in \cite{BushellEdmunds2012}, \cite{EdmundsGurkaLang2012} and \cite{BL2014}. Currently we know that this family is a Schauder basis of $L_s$ for all $s>1$ and $p>\tilde{p}_0$, and also a Riesz basis of $L_2$ for $p\in (\hat{p}_0, \tilde{p}_0]$, where  $\tilde{p}_0\approx 1.087$ and $\hat{p}_0\approx 1.044$ satisfy complicated identities involving hypergeometric functions.

Let 
\begin{equation}\label{cosinedef}
 \cos_p x:=\frac{\mathrm{d}}{\mathrm{d}x}\sin_p x \qquad \forall x \in \mathbb{R}
\end{equation}
and set $f(x)=\cos_p (\pi_p x)$, the $p$-cosine functions. From the various results established in the recent paper \cite{EdmundsGurkaLang2014}, it follows that $\mathcal{F}\cup\{1\}$ is a Schauder basis of $L_s$ for all $s>1$ and $p\in (p_0^\dagger,2]$ where $p^\dagger_0\approx 1.75$. In the present work we establish that this basis property in fact holds true for $p$ in a wider segment. To be precise, we show the following. 

\begin{mythe}\label{melon}
 There exist $p_0<\frac32$ and $p_1>\frac{11}{5}$, such that $\{\cos_p(n\pi_p \,\cdot)\}_{n=0}^\infty$ is a Schauder basis of $L_s$ for all $s>1$
 and $p \in [p_0, p_1]$.
\end{mythe}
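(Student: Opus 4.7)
The plan is to reduce the theorem to an isomorphism property of a single change-of-basis operator and then invoke the standard stability of Schauder bases under bounded isomorphisms. Define $T_p : L_s(0,1) \to L_s(0,1)$ on the classical cosine basis by
$$
T_p[\cos(n\pi \cdot)] := \cos_p(n\pi_p \cdot), \qquad n \geq 0,
$$
interpreting the $n=0$ term as $T_p[1] = 1$. Since $\{\cos(n\pi x)\}_{n \geq 0}$ is a Schauder basis of $L_s(0,1)$ for every $s > 1$ (a consequence of the M.\ Riesz theorem on the $L_s$-boundedness of partial Fourier sums), proving that $T_p$ extends to a bounded invertible operator on $L_s$ for each $p \in [p_0,p_1]$ will immediately yield that $\{\cos_p(n\pi_p x)\}_{n \geq 0}$ is a Schauder basis as claimed.

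The operator $T_p$ admits an explicit dilation expansion. Expand $\cos_p(\pi_p x) = \sum_{k \geq 0} \alpha_k^{(p)} \cos(k\pi x)$ in the standard cosine basis of $L_2(-1,1)$. Differentiating the second identity in \eqref{sineodd} in $x$ yields the anti-symmetry $\cos_p(\pi_p(1-y)) = -\cos_p(\pi_p y)$, which forces $\alpha_k^{(p)} = 0$ for every even $k$, and hence
$$
\cos_p(n\pi_p x) = \sum_{j \geq 0} \alpha_{2j+1}^{(p)} \cos\bigl((2j+1)n\pi x\bigr).
$$
Let $D_n$ be the integer dilation $D_n f(x) = \tilde f(nx)$, where $\tilde f$ denotes the even $2$-periodic extension of $f$; a direct change of variables together with the $2$-periodicity of $|\tilde f|^s$ shows that $D_n$ is an isometry on $L_s(0,1)$ for every positive integer $n$ and every $s \geq 1$. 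Splitting $L_s$ into the one-dimensional space of constants and the mean-zero subspace $\{f : \int_0^1 f = 0\}$ (which is complemented by the bounded projection $f \mapsto f - \int_0^1 f$), one sees that $T_p$ acts as the identity on the constants, whereas on the mean-zero subspace it takes the form
$$
T_p = \alpha_1^{(p)} I + \sum_{j \geq 1} \alpha_{2j+1}^{(p)} D_{2j+1}.
$$
A Neumann series argument then shows that $T_p$ is an $L_s$-isomorphism for every $s > 1$ whenever
$$
\sum_{j \geq 1} \bigl|\alpha_{2j+1}^{(p)}\bigr| < \bigl|\alpha_1^{(p)}\bigr|.
$$

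The main technical obstacle is to verify this strict inequality throughout the whole interval $[p_0,p_1]$ with $p_0 < 3/2$ and $p_1 > 11/5$. The substitution $t = \sin_p(\pi_p x)$, combined with the $p$-Pythagorean identity $|\sin_p|^p + |\cos_p|^p = 1$, rewrites each Fourier coefficient as the oscillatory integral
$$
\alpha_{2j+1}^{(p)} = \frac{4}{\pi_p} \int_0^1 \cos\!\left(\frac{(2j+1)\pi}{\pi_p} F_p(t)\right) \mathrm{d}t,
$$
whose phase is controlled by $F_p(t)/\pi_p$. The task then splits into obtaining a sharp lower bound on $|\alpha_1^{(p)}|$ and a sharp, summable upper bound on $|\alpha_{2j+1}^{(p)}|$ for $j \geq 1$, both uniform in $p$ over the target interval. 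These sharp Fourier coefficient estimates, announced in the abstract, are the heart of the proof: they must be tight enough that the left-hand sum stays strictly below the right-hand side at the endpoints $p = 3/2$ and $p = 11/5$, after which monotonicity in $p$ can be used to close up the interval in between.
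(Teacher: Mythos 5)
Your framework is the same as the paper's: the change-of-basis operator $T_p$ is exactly the operator $A=\sum_{j\ge 1}b_j(p)M_j$ built in Section~\ref{sec4}, your $D_n$ are the isometries $M_n$ of Lemma~\ref{parsnip}, your coefficients $\alpha_k^{(p)}$ are the Fourier coefficients $b_k(p)$, and your Neumann-series criterion $\sum_{j\ge1}|\alpha_{2j+1}^{(p)}|<|\alpha_1^{(p)}|$ is precisely \eqref{herb}. That part is sound (and your oscillatory-integral representation $b_j=\frac{4}{\pi_p}\int_0^1\cos\bigl(\frac{j\pi}{\pi_p}F_p(t)\bigr)\,\mathrm{d}t$ is a correct consequence of the substitution $t=\sin_p(\pi_p x)$). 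The problem is that everything after that point is a description of what remains to be done rather than a proof of it. The entire quantitative content of the theorem --- the specific thresholds $p_0<\tfrac32$ and $p_1>\tfrac{11}{5}$ --- lives in the verification of \eqref{herb}, and your proposal stops exactly there. The trivial bound from your integral representation is $|b_j|\le 4/\pi_p$, which is not even summable; the decay in $j$ has to be extracted by two integrations by parts and a careful change of variables through the inverse functions of $\cos_p'$ (the maps $\operatorname{w}_{k,p}$ for $1<p<2$ and $\operatorname{z}_p$ for $p>2$), yielding $|b_j|<\tfrac{8\pi_p c_p}{j^2\pi^2}$ in the first regime and an $O(j^{-p'-1})$-type bound involving $\zeta(p')$ in the second. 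None of this, nor the lower bounds $b_1\ge\pi/\pi_p$ (for $p<2$, via $\sin_p\ge\sin$) and $b_1>8/(\pi\pi_p)$ (for $p>2$, via concavity of $\sin_p$), appears in your argument.

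Your closing strategy --- ``verify the inequality at $p=\tfrac32$ and $p=\tfrac{11}{5}$ and use monotonicity in $p$'' --- would also not work as stated. The quantity $\sum_{j\ge3,\,\mathrm{odd}}|b_j(p)|/|b_1(p)|$ is not known (and is not claimed in the paper) to be monotone in $p$; moreover the target interval straddles $p=2$, where the two regimes require structurally different estimates, so a single monotonicity statement cannot carry you from one endpoint to the other. What the paper actually does is replace both sides of \eqref{herb} by explicit bounding functions of $p$, define $p_0$ and $p_1$ as roots of the resulting equations \eqref{equcasep<2} and \eqref{equation16}, prove convexity of $\pi_p^2c_p$ on $(1,2)$ and monotonicity of $h(p)$ on $[2,3]$ so that the inequality holds on all of $[p_0,2]$ and $[2,p_1]$, and only then locate $p_0\in(\tfrac43,\tfrac32)$ and $p_1>\tfrac{11}{5}$ by explicit numerical evaluations (including delicate upper bounds for $\zeta(\tfrac32)$ and $\zeta(\tfrac{11}{6})$). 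As it stands, your proposal reproduces the soft part of the argument and leaves the hard part as an announcement.
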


The constants $p_0$ and $p_1$ will be given analytically as the zeros of corresponding equations involving the parameter $p$. Their approximated values turn out to be  $p_0\approx 1.46$ and $p_1\approx 2.43$.

The proof of Theorem~\ref{melon} is naturally divided into the cases $1<p<2$ and $p>2$. The different parts of the paper follow this division. In Section~\ref{sec2}
we collect various properties of the $p$-trigonometric functions which will be useful later on. In Section~\ref{sec3} we establish precise upper bounds on the asymptotic behaviour of the Fourier coefficients of $\cos_p(\pi_p\cdot)$. In Section~\ref{sec4} we recall the framework for determining  invertibility of the change of coordinates map 
between the families $\{\cos(n\pi\cdot\}_{n=0}^\infty$ and
$\{\cos_p(n\pi_p\cdot\}_{n=0}^\infty$. 
In Section~\ref{sec5} we assemble the proof of Theorem~\ref{melon},
by combining the crucial criterion \eqref{herb} of Section~\ref{sec4} with the estimates of Section~\ref{sec3}. In the final Section~\ref{sec6} we describe the relation between the results announced here and other existing work.


\section{The generalised trigonometric functions} \label{sec2}

We begin by recalling
 various elementary properties of the $p$-cosine functions. A more complete account on this matter can be found in \cite[Section~2]{BushellEdmunds2012} and \cite[Chapter~2]{EdmundsLang2011}. 

Throughout we shall assume that $1<p<\infty$. Note that $\pi_p$ is a decreasing function, smooth in $p>1$, such that 
\[
 \begin{cases}\pi_p \to \infty & p \to 1^+ \\ \pi_p=\pi & p= 2 \\ \pi_p \to 2 & p\to \infty. \end{cases}  
\]
Here and everywhere below we write $p':=p/(p-1)$. According to \cite[(2.3)]{BushellEdmunds2012}, we know that 
\begin{align}\label{equation62}
 p'\pi_{p'}=p\pi_p.
\end{align}

From \eqref{sineodd} and \eqref{cosinedef} it immediately follows that
$\cos_p$ is $2\pi_p$-periodic,
\[
 \cos_p(x)=\cos_p(-x) \quad \text{and} \quad \cos_p\left(x+\frac{\pi_p}{2}\right)=-\cos_p\left(x-\frac{\pi_p}{2}\right) \qquad \forall x\in \mathbb{R}.
\]
Moreover, setting $y=\sin_p(x)$ for $x\in[0,\pi_p/2]$ in the formula for the derivative of the inverse function of \eqref{inversepsine}, gives
\begin{equation}\label{equation4}
    \cos_p(x)=(1-y^p)^{1/p}=(1-\sin_p(x)^p)^{1/p}.
\end{equation}
Thus, $\cos_p$ is decreasing in $(0,\pi_p/2]$, $\cos_p(0)=1$ and 
$\cos_p(\pi_p/2)=0$. In fact we have, 
\[
\vert \sin_p x \vert ^p+\vert \cos_p x \vert^p=1 \qquad \forall x \in \mathbb{R}.
\]
See \cite[(2.7)]{BushellEdmunds2012}.

\begin{mylem} \label{basicpropcos} For all $x \in [0, \frac1 2)$,
\begin{enumerate}[a.]
 \item \label{lbcp1}
\[
      \cos_p(\pi_p x)=\sin_{p'}\left(\pi_{p'}\left(\frac12 -x\right)\right)^{p'-1}
\]
\item \label{lbcp2} 
\[
\frac{\mathrm{d}}{\mathrm{d}x}\cos_p(x)=-\sin_p(x)^{p-1} \cos_p(x)^{2-p}
\]
\item \label{lbcp3}
\[
\frac{\mathrm{d}^2}{\mathrm{d}x^2}\cos_p(x)=\sin_p(x)^{p-2} \cos_p(x)^{3-2p}[2-p-\cos_p(x)^p] .
\]
\end{enumerate}
\end{mylem}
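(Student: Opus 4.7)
The plan is to treat the three identities separately. Parts (b) and (c) are direct consequences of the Pythagorean-type identity \eqref{equation4}, while part (a) is the only substantive one and rests on the $p \leftrightarrow p'$ duality built into \eqref{inversepsine}.

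For (b) I would differentiate \eqref{equation4} using the chain rule together with \eqref{cosinedef}. The factor $(1-\sin_p(x)^p)^{1/p-1}$ that appears equals $\cos_p(x)^{1-p}$ (since $\cos_p(x)^p=1-\sin_p(x)^p$), and combined with the extra $\cos_p(x)$ produced when differentiating $\sin_p$ this collapses to the stated $\cos_p(x)^{2-p}$. Having (b), the proof of (c) is a product-rule computation: differentiating (b) gives
\[
-(p-1)\sin_p(x)^{p-2}\cos_p(x)^{3-p}+(2-p)\sin_p(x)^{2(p-1)}\cos_p(x)^{3-2p},
\]
after which factoring out $\sin_p(x)^{p-2}\cos_p(x)^{3-2p}$ leaves the bracket $-(p-1)\cos_p(x)^p+(2-p)\sin_p(x)^p$. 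Replacing $\sin_p(x)^p$ by $1-\cos_p(x)^p$ collapses this to $2-p-\cos_p(x)^p$, which is the desired form.

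For (a), I would set $y=\sin_p(\pi_p x)$, so that \eqref{equation4} gives $\cos_p(\pi_p x)=(1-y^p)^{1/p}$. Since $1/(p'-1)=p-1$ and $(p-1)/p=1/p'$, raising the claimed identity to the power $p-1$ reduces it to
\[
\sin_{p'}\!\left(\pi_{p'}\!\left(\tfrac12-x\right)\right)=(1-y^p)^{1/p'}.
\]
Applying $F_{p'}$ and using $F_{p'}(1)=\pi_{p'}/2$, this is in turn equivalent to
\[
\int_{(1-y^p)^{1/p'}}^{1}(1-t^{p'})^{-1/p'}\,\mathrm{d}t=\pi_{p'} x.
\]
The key step is the substitution $t=(1-s^p)^{1/p'}$, after which a short calculation yields $1-t^{p'}=s^p$ and $\mathrm{d}t=-(p-1)s^{p-1}(1-s^p)^{-1/p}\,\mathrm{d}s$, while the endpoints $t=(1-y^p)^{1/p'}$ and $t=1$ correspond to $s=y$ and $s=0$ respectively. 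The integral therefore equals $(p-1)\int_0^{y}(1-s^p)^{-1/p}\,\mathrm{d}s=(p-1)F_p(y)=(p-1)\pi_p x$, and the identity $(p-1)\pi_p=\pi_{p'}$, which follows from \eqref{equation62} together with $p/p'=p-1$, closes the argument.

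The only substantial obstacle is the exponent bookkeeping in (a): the duality relations $p-1=p/p'$ and $p'-1=1/(p-1)$ must be applied cleanly both to recast the identity in integral form and then again inside the substitution. Once the substitution $t=(1-s^p)^{1/p'}$ is identified, everything collapses mechanically, and parts (b) and (c) are then purely algebraic.
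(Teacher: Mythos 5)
Your proposal is correct. For part (c) your computation is exactly the one in the paper: differentiate (b) by the product rule and then use $\vert\sin_p\vert^p+\vert\cos_p\vert^p=1$ to rewrite the bracket as $2-p-\cos_p(x)^p$. The difference is in parts (a) and (b), which the paper does not prove at all -- it simply cites the proofs of Propositions~2.1 and~2.2 of Bushell and Edmunds. You instead supply the arguments: for (b) the one-line chain-rule differentiation of \eqref{equation4}, and for (a) a genuine derivation in which you raise the claimed identity to the power $p-1=1/(p'-1)$, apply $F_{p'}$ to reduce it to the integral identity $\int_{(1-y^p)^{1/p'}}^{1}(1-t^{p'})^{-1/p'}\,\mathrm{d}t=\pi_{p'}x$, and then evaluate that integral by the substitution $t=(1-s^p)^{1/p'}$, closing with $(p-1)\pi_p=\pi_{p'}$, which is \eqref{equation62}. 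I checked the exponent bookkeeping ($1-t^{p'}=s^p$, $\mathrm{d}t=-(p-1)s^{p-1}(1-s^p)^{-1/p}\,\mathrm{d}s$, and the cancellation $s^{-p/p'}s^{p-1}=1$) and it is all consistent; the only implicit point worth making explicit is that applying $F_{p'}$ is a legitimate equivalence because $\pi_{p'}(\frac12-x)\in[0,\frac{\pi_{p'}}{2}]$ for $x\in[0,\frac12)$ and $F_{p'}$ is strictly increasing there. So your route is self-contained where the paper defers to the literature, at the cost of the extra substitution computation; the substance of what you prove for (a) is essentially the content of the cited result.
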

\begin{proof}
The calculations leading to ``\ref{lbcp1}'' and ``\ref{lbcp2}'' can be found in the proofs of \cite[Proposition~2.2]{BushellEdmunds2012} and \cite[Proposition~2.1]{BushellEdmunds2012}, respectively.
From \eqref{equation4} we get
\begin{align*}
\frac{\mathrm{d^2}}{\mathrm{d}x^2}\cos_p(x)&=(2-p)\sin_p(x)^{2p-2}\cos_p(x)^{3-2p}-(p-1)\sin_p(x)^{p-2}\cos_p(x)^{3-p}\\
&=\sin_p(x)^{p-2}\cos_p(x)^{3-2p}\left[(2-p)\sin_p(x)^p-(p-1)\cos_p(x)^p\right],
\end{align*}
which is ``\ref{lbcp3}''.
\end{proof}

The following inequalities will be important below. 

\begin{mylem}\label{fig} 
Let $1<p \leq q < \infty$ and $x \in [0, \frac12]$. Then 
\begin{enumerate}[a.]
\item \label{figa}
$\sin_p(\pi_p x) \geq \sin_q(\pi_q x)$
\item \label{figb}
$\cos_p(\pi_p x) \leq \cos_q(\pi_q x)$.
\end{enumerate}
 \end{mylem}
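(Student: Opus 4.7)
The plan is to establish (a) via an implicit function argument on $F_p$, and then to deduce (b) from (a) using the identity $|\sin_p|^p+|\cos_p|^p=1$ stated at the end of this section. For (a), fix $x\in[0,\frac12]$ and let $y(p):=\sin_p(\pi_p x)$. From the definition of $\sin_p$ one has $F_p(y(p))=\pi_p x=2x\,F_p(1)$, so setting $G_p(y):=F_p(y)/F_p(1)$, the value $y(p)$ is implicitly characterised by $G_p(y(p))=2x$. Since $G_p$ is strictly increasing in $y$, the task reduces to showing that $p\mapsto G_p(y)$ is non-decreasing for each fixed $y\in[0,1]$; the constant right-hand side $2x$ will then force $y(p)$ to be non-increasing in $p$, which is exactly (a).

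To prove this monotonicity I differentiate logarithmically in $p$. Writing $\psi_p(t):=\partial_p\log[(1-t^p)^{-1/p}]$ and $\langle\cdot\rangle_y$ for the mean on $(0,y)$ weighted by the density $(1-t^p)^{-1/p}$, one checks that
\[
\partial_p\log G_p(y)=\langle\psi_p\rangle_y-\langle\psi_p\rangle_1.
\]
A short computation gives $\psi_p(t)=\frac{\log(1-t^p)}{p^2}+\frac{t^p\log t}{p(1-t^p)}$, and a further differentiation collapses cleanly to $\psi_p'(t)=\frac{t^{p-1}\log t}{(1-t^p)^2}<0$ on $(0,1)$. Since $\psi_p$ is thus decreasing in $t$, a direct check shows that the running weighted mean $y\mapsto\langle\psi_p\rangle_y$ is itself non-increasing in $y$, whence $\langle\psi_p\rangle_y\geq\langle\psi_p\rangle_1$ and $\partial_p G_p(y)\geq 0$.

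For (b), set $s_p=\sin_p(\pi_p x)$ and $s_q=\sin_q(\pi_q x)$, so that $\cos_p(\pi_p x)=(1-s_p^p)^{1/p}$ and $\cos_q(\pi_q x)=(1-s_q^q)^{1/q}$ by \eqref{equation4}. The key auxiliary observation is that the map $r\mapsto(1-t^r)^{1/r}$ is non-decreasing on $(1,\infty)$ for each fixed $t\in[0,1]$; indeed its logarithmic derivative in $r$ equals $\frac{1}{r^2(1-t^r)}\bigl[-(1-t^r)\log(1-t^r)-rt^r\log t\bigr]$, a sum of two manifestly non-negative terms. Combined with part (a), this yields the chain
\[
\cos_p(\pi_p x)=(1-s_p^p)^{1/p}\leq(1-s_p^q)^{1/q}\leq(1-s_q^q)^{1/q}=\cos_q(\pi_q x),
\]
where the first inequality uses monotonicity in the exponent (with fixed base $s_p$) and the second uses $s_p\geq s_q$ supplied by (a).

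The principal obstacle is the sign of $\psi_p'(t)$ in (a); happily the computation produces the clean closed form above, so once that single sign check is in hand everything else reduces to elementary monotonicity observations.
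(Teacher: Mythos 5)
Your proposal is correct, and it takes a genuinely different route from the paper on both parts. For part (a) the paper simply cites Bushell--Edmunds (Corollary 4.4-(iii) of that reference), whereas you prove it from scratch: the reduction to monotonicity in $p$ of the normalised integral $G_p(y)=F_p(y)/F_p(1)$, the identity $\partial_p\log G_p(y)=\langle\psi_p\rangle_y-\langle\psi_p\rangle_1$, the closed form $\psi_p'(t)=t^{p-1}\log t/(1-t^p)^2<0$, and the fact that a running weighted mean of a decreasing function is itself non-increasing all check out. (The only step you leave implicit is the justification of differentiating under the integral sign at the singular endpoint $t=1$; this is harmless since the worst contribution to $|\psi_p|w$ behaves like $|\log(1-t)|\,(1-t)^{-1/p}$, which is integrable, but a sentence to that effect would tighten the argument.) For part (b) the paper instead differentiates the duality identity $\cos_p(\pi_p x)=\sin_{p'}\bigl(\pi_{p'}(\tfrac12-x)\bigr)^{p'-1}$ in $p$ and invokes part (a) at the conjugate exponent $p'$, which is decreasing in $p$; you avoid the duality entirely and argue through the Pythagorean relation \eqref{equation4} together with the elementary monotonicity of $r\mapsto(1-t^r)^{1/r}$, whose logarithmic derivative is indeed a sum of two non-negative terms. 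The trade-off is clear: the paper's proof of (b) is shorter but leans on Lemma~\ref{basicpropcos}-\ref{lbcp1} and on an external citation for (a), while yours is longer but fully self-contained and makes the mechanism behind (a) visible.
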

\begin{proof}
Statement ``\ref{figa}'' is \cite[Corollary 4.4\,-(iii)]{BushellEdmunds2012}. 

Let us show ``\ref{figb}''. A direct evaluation at $x=0$ and $x=1/2$ gives equality for all $p$ and $q$ at these points, so these two cases are immediate. Let $x \in (0, \frac12)$ be fixed. Since $p'$ is decreasing in $p>1$, 
from part ``\ref{figa}'' it follows that
\[
\frac{\mathrm{d}}{\mathrm{d} p}\sin_{p'}\Big(\pi_{p'} \Big(\frac12-x\Big)\Big)\geq 0 \qquad \forall p\in(1,\infty).
\]
Note that, $0<\sin_{p'}(\pi_{p'} (\frac12-x))<1$ and hence $\ln(\sin_{p'}(\pi_{p'} (\frac12-x)))<0$.
Substituting the identity from Lemma~\ref{basicpropcos}-\ref{lbcp1}, yields
\begin{align*}
&\frac{\mathrm{d}}{\mathrm{d} p}\cos_p(\pi_p x)=\frac{\mathrm{d}}{\mathrm{d} p}\left[\sin_{p'}\Big(\pi_{p'} \Big(\frac12-x\Big)\Big)\right]^{\frac{1}{p-1}}\\
&=\left[-\frac{\ln(\sin_{p'}(\pi_{p'} (\frac12-x)))}{(p-1)^2}+\frac{\frac{\mathrm{d}}{\mathrm{d} p}\left[\sin_{p'}(\pi_{p'} (\frac12-x))\right]}{(p-1)\sin_{p'}(\pi_{p'} (\frac12-x))}\right]\cos_p(\pi_p x)>0.
\end{align*}
This implies ``\ref{figb}''.
\end{proof}

\subsection{The case $1<p<2$}
For $1<p<2$, let $\operatorname{u}_p:[0, \frac1 2]\longrightarrow \mathbb{R}$ be given by
\[
\operatorname{u}_p(x):=\cos_p'(\pi_p x)=
-\sin_p(\pi_p x)^{p-1} \cos_p(\pi_p x)^{2-p}.
\]
This function will simplify the notation when we determine estimates  for the Fourier coefficients of the $p$-cosine functions in Section~\ref{section3.1}. Here and everywhere below we write
 \begin{equation} \label{defcp}
       c_p:=(p-1)^{\frac{p-1}{p}}(2-p)^{\frac{2-p}{p}}.
 \end{equation}

\begin{mylem}\label{Mango}
 Let $1<p<2$. Then
\begin{enumerate}[a.]
 \item\label{Mangoa} $\operatorname{u}_p(x) \leq 0$ for all $x \in [0, \frac1 2]$
 \item\label{Mangob} $\operatorname{u}_p(x)=0$ if and only if $x=0$ or $x=\frac12$
 \item\label{Mangod} $\operatorname{u}_p(x)=-c_p$ for $x\in [0, \frac1 2]$ if and only if $x=m_p\in (0, \frac12)$, where $m_p$ is the unique point such that $\cos_p(\pi_p m_p)^p=2-p$
\item\label{Mangoe} $\operatorname{u}_p:[0, m_p]\longrightarrow [-c_p, 0]$ is decreasing
\item\label{Mangof} $\operatorname{u}_p:[m_p, \frac1 2]\longrightarrow [-c_p, 0]$ is increasing
 \item\label{Mangoc} $\displaystyle\min_{x \in [0, \frac12]}\operatorname{u}_p(x)=-c_p$.
\end{enumerate}
\end{mylem}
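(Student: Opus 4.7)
The plan is to treat parts (\ref{Mangoa}) and (\ref{Mangob}) at once from the definition, and to obtain (\ref{Mangod})--(\ref{Mangoc}) by a monotonicity analysis of $\operatorname{u}_p$ via Lemma~\ref{basicpropcos}-\ref{lbcp3}.

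For (\ref{Mangoa}) and (\ref{Mangob}): on $[0,\tfrac12]$ we have $\pi_p x \in [0,\pi_p/2]$, and on this latter interval both $\sin_p$ and $\cos_p$ are non-negative, vanishing only at $0$ and $\pi_p/2$ respectively. Since $p-1>0$ and $2-p>0$, the factors $\sin_p(\pi_p x)^{p-1}$ and $\cos_p(\pi_p x)^{2-p}$ are both non-negative and vanish exactly at the corresponding endpoint, yielding both statements.

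For (\ref{Mangod})--(\ref{Mangof}): I would compute $\operatorname{u}_p'(x)=\pi_p \cos_p''(\pi_p x)$ and invoke Lemma~\ref{basicpropcos}-\ref{lbcp3} to get
\[
\operatorname{u}_p'(x)=\pi_p\sin_p(\pi_p x)^{p-2}\cos_p(\pi_p x)^{3-2p}\bigl[2-p-\cos_p(\pi_p x)^p\bigr].
\]
On the open interval $(0,\tfrac12)$ the prefactor is strictly positive, so $\operatorname{sgn}\operatorname{u}_p'(x)=\operatorname{sgn}[2-p-\cos_p(\pi_p x)^p]$. Because $\cos_p$ is strictly decreasing from $1$ to $0$ on $[0,\pi_p/2]$ and $0<2-p<1$, the function $x\mapsto \cos_p(\pi_p x)^p$ takes the value $2-p$ at a unique point $m_p\in(0,\tfrac12)$; it is greater than $2-p$ on $(0,m_p)$ and smaller on $(m_p,\tfrac12)$. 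This immediately gives the strict monotonicity in (\ref{Mangoe}) and (\ref{Mangof}).

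Finally, at $x=m_p$, the identity $|\sin_p y|^p+|\cos_p y|^p=1$ with $y=\pi_p m_p$ gives $\sin_p(\pi_p m_p)^p=p-1$, so that
\[
\operatorname{u}_p(m_p)=-\sin_p(\pi_p m_p)^{p-1}\cos_p(\pi_p m_p)^{2-p}=-(p-1)^{\frac{p-1}{p}}(2-p)^{\frac{2-p}{p}}=-c_p,
\]
which, combined with the monotonicity above and $\operatorname{u}_p(0)=\operatorname{u}_p(\tfrac12)=0$, proves (\ref{Mangod}) (uniqueness because $m_p$ is the only interior critical point) and (\ref{Mangoc}). There is no real obstacle; the only point requiring mild care is that the prefactor $\sin_p(\pi_p x)^{p-2}\cos_p(\pi_p x)^{3-2p}$ may diverge at the endpoints, but the sign analysis is carried out only on the open interval, and the endpoint values of $\operatorname{u}_p$ are handled directly from the definition.
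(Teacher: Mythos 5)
Your proposal is correct and follows essentially the same route as the paper: parts (a) and (b) from non-negativity and the vanishing sets of $\sin_p$ and $\cos_p$ on $[0,\frac12]$, then the sign of $\operatorname{u}_p'$ via Lemma~\ref{basicpropcos}-\ref{lbcp3} and the monotone decrease of $\cos_p(\pi_p x)^p$ through the value $2-p$, and finally the evaluation $\operatorname{u}_p(m_p)=-c_p$ using the identity $|\sin_p|^p+|\cos_p|^p=1$. Your remark on the possible divergence of the prefactor at the endpoints is a sensible extra precaution that the paper leaves implicit.
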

\begin{proof}
Since $\sin_p(\pi_p x)$ and $\cos_p(\pi_p x)$ are non-negative over $[0, \frac{1}{2}]$, then ``\ref{Mangoa}'' holds true. Since $\sin_p(\pi_p x)$ only vanishes at $x=0$ and $\cos_p(\pi_p x)$ only vanishes at $x=\frac12$ in this interval, then ``\ref{Mangob}'' holds true.

Lemma~\ref{basicpropcos}-\ref{lbcp3} gives
\[
 \operatorname{u}_p'(x)=\pi_p \sin_p(\pi_p x)^{p-2} \cos_p(\pi_p x)^{3-2p}[2-p-\cos_p(\pi_p x)^p].
\]
Neither $\sin_p$ nor $\cos_p$ vanish in $(0, \frac12)$. On the other hand, $\cos_p(0)=1>2-p$, $\cos_p(\frac{\pi_p}{2})=0<2-p$ and $\cos_p(\pi_p x)^p$ is decreasing for $x \in (0, \frac12)$.
Then the term $\cos_p(\pi_p x)^p+p-2$ indeed vanishes at the unique point $m_p \in (0, \frac12)$ as stated in ``\ref{Mangod}''. 

At $m_p$,
\begin{align*}
      \operatorname{u}_p(m_p)&=-\sin_p(\pi_p m_p)^{p-1} \cos_p(\pi_p m_p)^{2-p}\\
                        &=-(1-\cos_p(\pi_p m_p)^p)^\frac{p-1}{p} \cos_p(\pi_p m_p)^{2-p}=-c_p.
\end{align*} Hence, the proof of ``\ref{Mangoe}'' and ``\ref{Mangof}'', and thus of ``\ref{Mangoc}'', is achieved as follows. Just observe that in the expression for $\operatorname{u}'_p(x)$ above, $\cos_p(\pi_p x)^p>2-p$ for $x \in [0, m_p)$ and $\cos_p(\pi_p x)^p<2-p$ for $x \in (m_p, \frac12)$, because $\cos_p(\pi_p x)$ is decreasing in $x \in (0, \frac12)$. 
\end{proof}

According to parts ``\ref{Mangoe}'' and ``\ref{Mangof}'' of Lemma~\ref{Mango}, the function $\operatorname{u}_p$ is invertible, when restricted to the segments $[0, m_p]$ and $[m_p, \frac12]$. We denote the inverses by $\operatorname{w}_{1,p}: [-c_p, 0] \longrightarrow [0, m_p]$ and $\operatorname{w}_{2,p}: [-c_p, 0] \longrightarrow [m_p, \frac12]$, respectively,
so that 
\[
       \operatorname{u}_p(\operatorname{w}_{k,p}(x))=x \qquad \forall x\in [-c_p, 0] \quad k=1,2.
\]

\subsection{The case $p>2$}
For $p>2$, let
$\operatorname{v}_p:(0, \frac12] \longrightarrow [0, \infty)$ be given by
\[
\operatorname{v}_p(x):=(p'-1)\sin_{p'}(\pi_{p'} x)^{p'-2} \cos_{p'}(\pi_{p'} x).
\] 
Let us summarise various properties of this function, which will be employed in Section~\ref{section3.2}. 

\begin{mylem}\label{almond}
Let $p>2$. Then
\begin{enumerate}[a.]
\item \label{almonda} $\operatorname{v}_p$ is decreasing in $(0, \frac12]$
\item \label{almondb} $\displaystyle\lim_{x \rightarrow 0^+} x \operatorname{v}_p(x)=0$
\item \label{almondc} $\displaystyle \lim_{x \rightarrow 0^+}\operatorname{v}_p(x)=+\infty$ and $\operatorname{v}_p(\frac12)=0$
\item \label{almondd} $\displaystyle \lim_{x\rightarrow 0^+}\operatorname{v}'_p(x)=-\infty$ and $\operatorname{v}'_p(\frac12)=0$.
\end{enumerate}
\end{mylem}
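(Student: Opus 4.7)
The plan is to derive all four claims from a single closed-form expression for $\operatorname{v}'_p(x)$ combined with the near-endpoint behaviour of $\sin_{p'}$ and $\cos_{p'}$. Write $s:=\sin_{p'}(\pi_{p'}x)$ and $c:=\cos_{p'}(\pi_{p'}x)$, and note that $p>2$ forces $1<p'<2$.

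First I would compute $\operatorname{v}'_p$. Using $\sin_{p'}'=\cos_{p'}$ from \eqref{cosinedef} applied to $p'$, and $\cos_{p'}'=-\sin_{p'}^{p'-1}\cos_{p'}^{2-p'}$ from Lemma~\ref{basicpropcos}-\ref{lbcp2} at index $p'$, the chain rule yields $s'=\pi_{p'}c$ and $c'=-\pi_{p'}s^{p'-1}c^{2-p'}$. Differentiating $\operatorname{v}_p=(p'-1)s^{p'-2}c$ by the product rule and extracting the common factor $s^{p'-3}$ gives
\[
\operatorname{v}'_p(x)=(p'-1)\pi_{p'}s^{p'-3}\bigl[(p'-2)c^{2}-s^{p'}c^{2-p'}\bigr].
\]
I would then invoke the identity $s^{p'}+c^{p'}=1$ to substitute $s^{p'}=1-c^{p'}$; after brief tidying the bracket collapses to $c^{2-p'}\bigl[(p'-1)c^{p'}-1\bigr]$, leaving the clean factorisation
\[
\operatorname{v}'_p(x)=(p'-1)\pi_{p'}s^{p'-3}c^{2-p'}\bigl[(p'-1)c^{p'}-1\bigr].
\]

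From this formula, part ``\ref{almonda}'' is immediate: on $(0,\tfrac12)$ both $s$ and $c$ are strictly positive, while $p'-1<1$ and $c^{p'}\leq 1$ force the bracket to be strictly negative, so $\operatorname{v}'_p<0$. For parts ``\ref{almondb}'' and ``\ref{almondc}'' I would rely on the first-order expansion $\sin_{p'}(y)=y+o(y)$ as $y\to 0^+$ (which follows from $\sin_{p'}(0)=0$ and $\sin_{p'}'(0)=\cos_{p'}(0)=1$), together with $\cos_{p'}(0)=1$. These give the asymptotic $\operatorname{v}_p(x)\sim(p'-1)(\pi_{p'}x)^{p'-2}$ as $x\to 0^+$. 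Since $p'-2<0$ this yields the divergence to $+\infty$ in ``\ref{almondc}'', and since $p'-1>0$ it gives $x\operatorname{v}_p(x)\to 0$, which is ``\ref{almondb}''. The boundary values $\operatorname{v}_p(\tfrac12)=0$ and $\operatorname{v}'_p(\tfrac12)=0$ drop out of the two closed forms because $\cos_{p'}(\pi_{p'}/2)=0$ while the exponent $2-p'$ is strictly positive. Finally, substituting the same near-zero asymptotics into the simplified expression for $\operatorname{v}'_p$ gives $\operatorname{v}'_p(x)\sim(p'-1)(p'-2)\pi_{p'}(\pi_{p'}x)^{p'-3}$, which tends to $-\infty$ because $p'-3<0$ and $p'-2<0$, completing part ``\ref{almondd}''.

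The only mildly delicate point is spotting the factorisation through $c^{2-p'}$ after the substitution $s^{p'}=1-c^{p'}$; once the compact form of $\operatorname{v}'_p$ is in hand, the signs of the exponents $p'-3$, $p'-2$, $p'-1$ and $2-p'$ under the standing hypothesis $1<p'<2$ read off all four conclusions almost mechanically.
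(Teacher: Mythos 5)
Your proposal is correct and follows essentially the same route as the paper: the closed form you derive for $\operatorname{v}'_p$ is exactly the expression the paper uses for parts ``\ref{almondc}'' and ``\ref{almondd}'', and your computation (including the substitution $s^{p'}=1-c^{p'}$ and the factorisation through $c^{2-p'}$) checks out. The only cosmetic differences are that the paper proves part ``\ref{almonda}'' directly from the monotonicity of $\sin_{p'}$ and $\cos_{p'}$ without differentiating, and handles part ``\ref{almondb}'' by L'H\^opital rather than the first-order expansion $\sin_{p'}(y)=y+o(y)$, both of which are equivalent to what you do.
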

\begin{proof}
For $p>2$, $p'\in (1,2)$ and so $p'-2<0$. Since, $\sin_{p'}(\pi_{p'} x)$ is increasing and $\cos_{p'}(\pi_{p'} x)$ is decreasing in $x\in (0, \frac12)$, then ``\ref{almonda}'' holds true.

Let us show ``\ref{almondb}''. L'H{\^o}pital's Rule gives
 \[
\lim_{x \rightarrow 0^+}\frac{x}{[\sin_{p'}(\pi_{p'} x)]^{2-p'}}=\displaystyle\lim_{x\rightarrow 0^+}\frac{ [\sin_{p'}(\pi_{p'} x)]^{p'-1}}{(2-p')\pi_{p'}\cos_{p'}(\pi_{p'} x)}=0.
 \]
Then, 
\[
\displaystyle\lim_{x \rightarrow 0^+} x \operatorname{v}_p(x)=\displaystyle\lim_{x \rightarrow 0^+}(p'-1)\frac{x\cos_{p'}(\pi_{p'} x)}{[\sin_{p'}(\pi_{p'} x)]^{2-p'}}=0,
\]
as claimed in ``\ref{almondb}''.

Both statements ``\ref{almondc}'' and ``\ref{almondd}'' follow directly from \eqref{equation4} and the expression 
\[
\operatorname{v}'_p(x)=(p'-1)\pi_{p'}\sin_{p'}(\pi_{p'}x)^{p'-3}\cos_{p'}(\pi_{p'}x)^{2-p'}\left[(p'-1)\cos_{p'}(\pi_{p'}x)^{p'}-1\right].
\]
\end{proof}

According to this lemma, there exists a function $\operatorname{z}_p:[0,\infty) \rightarrow (0, \frac12]$ such that $\operatorname{z}_p$ is the inverse function of $\operatorname{v}_p$. This inverse function has the following characteristics.
\begin{enumerate}[a.]
\item $\operatorname{z}_p$ is decreasing in $[0, \infty)$
\item $\operatorname{z}_p(0)=\frac12$ and $\displaystyle\lim_{x \rightarrow \infty}\operatorname{z}_p(x)=0$
\item $\displaystyle\lim_{x \rightarrow 0^+}\operatorname{z}'_p(x)=+\infty$ and $\displaystyle\lim_{x \rightarrow \infty}\operatorname{z}'_p(x)=0$.
\end{enumerate}


\section{The Fourier coefficients of the $p$-cosine functions}
\label{sec3}

Let
\[
 a_j(p)\equiv a_j:=2\int_0^1 \sin_p(\pi_p x) \sin(j \pi x) \mathrm{d}x\qquad \forall j \in \mathbb{N}
\]
be the Fourier sine coefficients of $\sin_p(\pi_p x)$.
Let
\[
 b_j(p)\equiv b_j:=2\int_0^1 \cos_p(\pi_p x) \cos(j \pi x) \mathrm{d}x \qquad \forall j \in \mathbb{N}\cup\{0\}
\]
be the Fourier cosine coefficients of $\cos_p(\pi_p x)$. Since $\sin_p$ is an odd function and $\cos_p$ is an even function, 
$a_j=b_j=0$ for all $j\equiv_2 0$. 

\begin{mylem}\label{plum}
For $j\in \mathbb{N}$,
\begin{align*}
b_j(p)=\frac{j\pi}{\pi_p} a_j(p).
\end{align*}
\end{mylem}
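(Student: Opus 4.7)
The plan is to prove this identity by a single integration by parts on the integral defining $b_j(p)$, exploiting the fundamental relation $\frac{\mathrm d}{\mathrm dx}\sin_p(x)=\cos_p(x)$ from \eqref{cosinedef}, which tells us that $\frac{1}{\pi_p}\sin_p(\pi_p x)$ is an antiderivative of $\cos_p(\pi_p x)$. Concretely, I would take $u=\cos(j\pi x)$ and $\mathrm{d}v=\cos_p(\pi_p x)\,\mathrm dx$, so that $\mathrm du=-j\pi\sin(j\pi x)\,\mathrm dx$ and $v=\frac{1}{\pi_p}\sin_p(\pi_p x)$, and then rearrange the resulting expression to recognise the defining integral of $a_j(p)$.

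The one point that needs checking is that the boundary term
\[
\left[\frac{\cos(j\pi x)\sin_p(\pi_p x)}{\pi_p}\right]_0^1
\]
vanishes. At $x=0$ this is immediate from $\sin_p(0)=0$. At $x=1$ we need $\sin_p(\pi_p)=0$, which follows from the symmetry identity \eqref{sineodd}: setting $x=\pi_p/2$ gives $\sin_p(0)=\sin_p(\pi_p)$, hence both values are zero. Once the boundary contribution is dispatched, the remaining integral equals $\frac{j\pi}{\pi_p}\cdot 2\int_0^1\sin_p(\pi_p x)\sin(j\pi x)\,\mathrm dx=\frac{j\pi}{\pi_p}a_j(p)$, which is the claim.

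There is no real obstacle here: the only subtle ingredient is the identification of the correct antiderivative of $\cos_p(\pi_p x)$ and the use of the reflection rule \eqref{sineodd} to confirm $\sin_p(\pi_p)=0$. Everything else is a routine application of integration by parts and direct comparison with the definition of $a_j(p)$.
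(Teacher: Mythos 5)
Your proof is correct and uses essentially the same idea as the paper: a single integration by parts with $\tfrac{1}{\pi_p}\sin_p(\pi_p x)$ as the antiderivative of $\cos_p(\pi_p x)$, with the boundary term vanishing. The only cosmetic difference is that the paper first reduces the integral to $[0,\tfrac12]$ for odd $j$ (so the boundary term dies because $\cos(j\pi/2)=0$), whereas you work directly on $[0,1]$ and kill the boundary term via $\sin_p(\pi_p)=\sin_p(0)=0$; both are valid.
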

\begin{proof}
Let $j\equiv_2 1$. Integration by parts alongside with the fact that $\cos_p(\pi_p x)$ and $\cos(j \pi x)$ are odd with respect to $\frac{1}{2}$,  yield
\begin{align*}
      b_j&=2\int_0^1 \cos_p(\pi_p x) \cos(j \pi x) \mathrm{d}x=4\int_0^{\frac 1 2} \cos_p(\pi_p x) \cos(j \pi x) \mathrm{d}x\\
         &=\frac{4}{\pi_p}\cos(j \pi x) \sin_p(\pi_p x)\Big\vert_0^\frac12+\frac{4j \pi}{\pi_p}\int_0^\frac1 2 \sin_p(\pi_p x) \sin(j \pi x) \mathrm{d}x\\
         &=\frac{j\pi}{\pi_p} a_j.
\end{align*}
\end{proof}
We now find estimates on $\vert b_j(p) \vert$ in terms of the parameter $p>1$.

\subsection{The case $1<p<2$} \label{section3.1}

\begin{mylem} \label{fourcoeffboundless2}
For $1<p<2$, let $c_p>0$ be given by \eqref{defcp}. Then 
\[
 \vert b_j(p) \vert <\frac{8\pi_p}{j^2 \pi^2}c_p \qquad \forall j \ge 1.
\]
\end{mylem}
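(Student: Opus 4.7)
The plan is a two-fold integration by parts in which $c_p$ appears as the total variation of $\operatorname{u}_p$ on $[0,\tfrac12]$. For $j\equiv_2 0$ the coefficient vanishes and the bound is trivial, so fix $j\equiv_2 1$. Exactly as in the proof of Lemma~\ref{plum}, oddness about $\tfrac12$ of both $\cos_p(\pi_p\,\cdot)$ and $\cos(j\pi\,\cdot)$ collapses the integral to
\[
 b_j=4\int_0^{1/2}\cos_p(\pi_p x)\cos(j\pi x)\,\mathrm{d}x.
\]

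The first integration by parts differentiates $\cos_p(\pi_p x)$ to $\pi_p\operatorname{u}_p(x)$ and antidifferentiates $\cos(j\pi x)$ to $\frac{1}{j\pi}\sin(j\pi x)$. The boundary contribution vanishes at $x=0$ because $\sin(0)=0$ and at $x=\tfrac12$ because $\cos_p(\pi_p/2)=0$, yielding
\[
 b_j=-\frac{4\pi_p}{j\pi}\int_0^{1/2}\operatorname{u}_p(x)\sin(j\pi x)\,\mathrm{d}x.
\]
A second integration by parts differentiates $\operatorname{u}_p$ and antidifferentiates $\sin(j\pi x)$ to $-\frac{1}{j\pi}\cos(j\pi x)$. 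The critical input here is Lemma~\ref{Mango}-\ref{Mangob}, which gives $\operatorname{u}_p(0)=\operatorname{u}_p(\tfrac12)=0$ and so kills the new boundary term, producing
\[
 b_j=-\frac{4\pi_p}{j^2\pi^2}\int_0^{1/2}\operatorname{u}_p'(x)\cos(j\pi x)\,\mathrm{d}x.
\]

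The triangle inequality combined with $|\cos(j\pi x)|\leq 1$ then gives $|b_j|\leq\frac{4\pi_p}{j^2\pi^2}\int_0^{1/2}|\operatorname{u}_p'(x)|\,\mathrm{d}x$, and the remaining integral is precisely the total variation of $\operatorname{u}_p$ on $[0,\tfrac12]$. By parts \ref{Mangoe} and \ref{Mangof} of Lemma~\ref{Mango}, the function $\operatorname{u}_p$ first decreases from $0$ to $-c_p$ on $[0,m_p]$ and then increases back from $-c_p$ to $0$ on $[m_p,\tfrac12]$, so this total variation equals $2c_p$ and produces the factor $8\pi_p c_p$. Strict inequality follows because $|\cos(j\pi x)|<1$ except at finitely many points of $[0,\tfrac12]$, whereas $\operatorname{u}_p'$ vanishes only at $m_p$ and is therefore non-zero almost everywhere.

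The only technical point I expect is justifying the second integration by parts rigorously: for $1<p<2$ the derivative $\operatorname{u}_p'$ is unbounded at both endpoints, although the singularities are integrable. I would handle this by performing the integration by parts on $[\epsilon,\tfrac12-\epsilon]$ and passing to the limit $\epsilon\to 0^+$, using continuity and vanishing of $\operatorname{u}_p$ at $0$ and $\tfrac12$ to dispose of the boundary contributions, and dominated convergence to handle the interior.
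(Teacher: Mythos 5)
Your proposal is correct and follows essentially the same route as the paper: two integrations by parts, the vanishing of $\operatorname{u}_p$ at $0$ and $\tfrac12$ to kill the boundary terms, and the bound $\int_0^{1/2}\vert\operatorname{u}_p'\vert = 2c_p$ coming from the monotonicity structure in Lemma~\ref{Mango}. The paper realises this last step by substituting the inverse functions $\operatorname{w}_{1,p}$, $\operatorname{w}_{2,p}$ on each monotone piece, which is exactly your total-variation computation in different clothing (and likewise sidesteps the endpoint singularities of $\operatorname{u}_p'$ that you flag).
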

\begin{proof} Integrate by parts twice to get
\begin{align*}
b_j&=4\int_0^\frac{1}{2} \cos_p(\pi_p x) \cos(j \pi x) \mathrm{d}x\notag\\
&=\frac{4}{j \pi} \cos_p(\pi_p x) \sin(j \pi x)\Big\vert_0^\frac12-\frac{4\pi_p}{j \pi}\int_0^\frac{1}{2} \cos'_p(\pi_p x) \sin(j \pi x) \mathrm{d}x\notag\\
&=-\frac{4\pi_p}{j \pi}\int_0^\frac{1}{2} \cos'_p(\pi_p x) \sin(j \pi x) \mathrm{d}x \\
&=\frac{4\pi_p}{j^2 \pi^2} \cos'_p(\pi_p x)\cos(j \pi x)\Big\vert_0^\frac12-\frac{4\pi_p}{j^2 \pi^2} \int_0^\frac{1}{2} \frac{d}{\mathrm{d}x} [\cos'_p(\pi_p x)]\cos(j \pi x) \mathrm{d}x.
\end{align*}
From the identities in Lemma~\ref{Mango}-\ref{Mangob}, it follows that the  boundary term in the fourth equality always vanishes. Thus, 
\begin{align*}
b_j&=-\frac{4\pi_p}{j^2 \pi^2} \int_0^\frac{1}{2} \operatorname{u}'_p(x)\cos(j \pi x) \mathrm{d}x\\
&=-\frac{4\pi_p}{j^2 \pi^2}\Big(\int_0^{m_p}+\int_{m_p}^\frac12 \operatorname{u}'_p(x) \cos(j \pi x) \mathrm{d}x\Big) \\
&=-\frac{4\pi_p}{j^2 \pi^2}\Big(\int_0^{-c_p} \cos(j \pi \operatorname{w}_{1, p}(s)) ds+\int_{-c_p}^0 \cos(j \pi \operatorname{w}_{2, p}(s)) \mathrm{d}s\Big).
\end{align*}
Hence, 
\begin{align*}
\vert b_j \vert &\leq \frac{4\pi_p}{j^2 \pi^2}\Big[\int_{-c_p}^0 \vert\cos(j \pi \operatorname{w}_{1, p}(s))\vert ds+\int_{-c_p}^0\vert\cos(j \pi \operatorname{w}_{2, p}(s))\vert \mathrm{d}s\Big]\\
&<\frac{8\pi_p}{j^2 \pi^2}c_p,
\end{align*}
because the functions inside the integrals are not constants identically equal to 1.
\end{proof}

\subsection{The case $p>2$} \label{section3.2}
Let $p>2$. According to Lemma~\ref{basicpropcos}-\ref{lbcp1},  
\begin{align*}
    b_j(p)&= 4 \int _0^{\frac12} 
    \sin_{p'}\left(\pi_{p'}\left(\frac12 -x\right)\right)^{\frac{1}{p-1}} \cos(j\pi x)
    \mathrm{d} x.
\end{align*}
Since $\cos(j \pi(\frac12-t))=(-1)^{\frac{j-1}{2}}\sin(j\pi t)$ for $j\equiv_2 1$, changing variables to $t=\frac12-x$ gives
\begin{align}\label{equation41}
    b_j &= (-1)^{\frac{j-1}{2}}4 \int _0^{\frac12} 
    \sin_{p'}(\pi_{p'} t)^{\frac{1}{p-1}} \sin(j\pi t) \mathrm{d} t\notag.
    \end{align}
By virtue of Lemma~\ref{almond} and integration by parts twice, then
\begin{align}
    b_j& = (-1)^{\frac{j-1}{2}}\frac{4\pi_{p'}}{j\pi} \int _0^{\frac12} 
   \operatorname{v}_p(t) \cos(j\pi t) \mathrm{d} t \notag\\
   &=(-1)^{\frac{j-1}{2}}\frac{4 \pi_{p'}}{j \pi}\left[\frac{1}{j \pi}\operatorname{v}_p(t)\sin(j \pi t)\Big\vert_0^\frac12-\frac{1}{j \pi} \int_0^\frac12 \operatorname{v}'_p(t)\sin(j\pi t)\mathrm{d}t \right]\notag\\
   & = (-1)^{\frac{j+1}{2}} \frac{4\pi_{p'}}{j^2\pi^2} \int _0^{\frac12} 
    \operatorname{v}'_p(t) \sin(j\pi t) \mathrm{d} t \notag\\
   & = (-1)^{\frac{j+3}{2}} \frac{4\pi_{p'}}{j^2\pi^2} \int _0^{\infty} 
    \sin\left(j\pi \operatorname{z}_p(y)\right) \mathrm{d} y .
\end{align}

\begin{mylem}\label{tomato}
Let $p>2$. Then 
\[
    |b_j(p)| < \frac{2\pi_{p'}}{\pi^2(p-1)}\left[2+\frac{\pi^2}{2}(p-2)\right]j^{-p'} \qquad \forall j\geq 3.
\]  
\end{mylem}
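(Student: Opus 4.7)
The plan is to estimate the oscillatory integral
\[
A:=\int_0^\infty \sin\bigl(j\pi \operatorname{z}_p(y)\bigr)\,\mathrm{d}y
\]
appearing in the representation $b_j=(-1)^{(j+3)/2}\frac{4\pi_{p'}}{j^2\pi^2}A$ just obtained. Using $1/(p-1)=p'-1$ and $(p-2)/(p-1)=2-p'$, the target is equivalent to
\[
|A|<\Bigl[(p'-1)+\tfrac{\pi^2(2-p')}{4}\Bigr]j^{\,2-p'},
\]
so it suffices to prove this inequality.

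I would split the integration at $y_\star:=\operatorname{v}_p(1/(2j))$, the unique point where $\operatorname{z}_p(y_\star)=1/(2j)$ and hence where the argument $j\pi\operatorname{z}_p(y)$ crosses $\pi/2$. On $[0,y_\star]$ the argument oscillates in $[\pi/2,j\pi/2]$, and the coarse bound $|\sin|\le 1$ gives $|\int_0^{y_\star}\sin(j\pi\operatorname{z}_p(y))\,\mathrm{d}y|\le y_\star=\operatorname{v}_p(1/(2j))$. On $[y_\star,\infty)$ the argument lies in $[0,\pi/2]$, so $|\sin\theta|\le\theta$ applies and $|\int_{y_\star}^\infty\sin(j\pi\operatorname{z}_p(y))\,\mathrm{d}y|\le j\pi\int_{y_\star}^\infty\operatorname{z}_p(y)\,\mathrm{d}y$. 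Substituting $s=\operatorname{z}_p(y)$ and integrating by parts once, using Lemma~\ref{almond}-\ref{almondb} to kill the boundary contribution at the origin, converts the latter into $j\pi\int_0^{1/(2j)}\operatorname{v}_p(s)\,\mathrm{d}s-\frac{\pi}{2}\operatorname{v}_p(1/(2j))$; summing the two pieces yields
\[
|A|\le\bigl(1-\tfrac{\pi}{2}\bigr)\operatorname{v}_p(1/(2j))+j\pi\int_0^{1/(2j)}\operatorname{v}_p(s)\,\mathrm{d}s .
\]

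Next I would substitute sharp estimates on the two ingredients. Lemma~\ref{fig} with $p'\le 2$ yields $\sin_{p'}(\pi_{p'}s)\ge\sin(\pi s)\ge 2s$ and $\cos_{p'}(\pi_{p'}s)\le\cos(\pi s)$, hence $\operatorname{v}_p(s)\le(p'-1)(2s)^{p'-2}$ on $(0,1/2]$, and in particular $\operatorname{v}_p(1/(2j))\le(p'-1)\,j^{\,2-p'}$, which supplies the $(p'-1)$ piece of the target. For the integral, the substitution $u=\sin_{p'}(\pi_{p'}s)$ produces the exact antiderivative $\int\operatorname{v}_p(s)\,\mathrm{d}s=\sin_{p'}(\pi_{p'}s)^{p'-1}/\pi_{p'}$; together with the concavity bound $\sin_{p'}(x)\le x$ and $\cos_{p'}(\pi_{p'}/(2j))\le 1$, the linear combination $j\pi\int_0^{1/(2j)}\operatorname{v}_p-\frac{\pi}{2}\operatorname{v}_p(1/(2j))$ is controlled by $\frac{\pi^2(2-p')}{4}\,j^{\,2-p'}$, and adding this to the $(p'-1)\,j^{\,2-p'}$ piece yields the desired estimate. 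The strict inequality when $j\ge 3$ comes from the strict slack in $\sin_{p'}(\pi_{p'}x)>\sin(\pi x)>2x$ on $(0,1/2)$.

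The main technical obstacle lies precisely in this last calibration. The naive estimate $j\pi\int_0^{1/(2j)}\operatorname{v}_p\le\frac{\pi}{2}\,j^{\,2-p'}$, obtained by discarding the negative boundary contribution $-\frac{\pi}{2}\operatorname{v}_p(1/(2j))$, gives a leading constant $\frac{\pi}{2}$ which exceeds the target near $p=2$. The correct $(2-p')$-scaling of the second constant only emerges once one keeps the negative term alive and exploits the cancellation it provides against $j\pi\int\operatorname{v}_p$, using the exact antiderivative together with the fact that $\operatorname{v}_p(1/(2j))\sim(p'-1)j^{\,2-p'}$ while $j\pi\int_0^{1/(2j)}\operatorname{v}_p\sim\frac{\pi}{2}\,j^{\,2-p'}$ to leading order, so that the two leading contributions conspire to leave only the $O((2-p'))$ remainder.
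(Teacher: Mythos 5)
Your setup (the representation $b_j=(-1)^{(j+3)/2}\tfrac{4\pi_{p'}}{j^2\pi^2}\int_0^\infty \sin(j\pi \operatorname{z}_p(y))\,\mathrm{d}y$, the split of the $y$-axis at the point where the phase crosses $\pi/2$, the bound $|{\sin}|\le 1$ on the head, and the endpoint estimate $\operatorname{v}_p(1/(2j))\le (p'-1)j^{2-p'}$) mirrors the paper's argument. The gap is in your tail estimate. Linearising via $\sin\theta\le\theta$ on $[y_\star,\infty)$ and then integrating by parts leaves you needing
\[
j\pi\int_0^{1/(2j)}\operatorname{v}_p(s)\,\mathrm{d}s-\frac{\pi}{2}\operatorname{v}_p\Bigl(\frac{1}{2j}\Bigr)\;\le\;\frac{\pi^2(2-p')}{4}\,j^{2-p'},
\]
and this is false for $p$ near $2$. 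Using your exact antiderivative, the left side equals $\tfrac{j\pi}{\pi_{p'}}S^{r}-\tfrac{\pi}{2}rS^{r-1}C$ with $S=\sin_{p'}(\pi_{p'}/(2j))$, $C=\cos_{p'}(\pi_{p'}/(2j))$, $r=p'-1$. As $p\to 2^+$ this tends to $\tfrac{\pi}{2}\bigl(\tfrac{\sin\theta}{\theta}-\cos\theta\bigr)$ with $\theta=\pi/(2j)$, which is strictly positive (it is equivalent to $\tan\theta>\theta$), while the right side tends to $0$. For instance at $j=3$ the full bound $(1-\tfrac{\pi}{2})\operatorname{v}_p(1/(2j))+j\pi\int_0^{1/(2j)}\operatorname{v}_p$ tends to $(1-\tfrac{\pi}{2})\cos\tfrac{\pi}{6}+3\sin\tfrac{\pi}{6}\approx 1.006$, exceeding the target $1$; by continuity your estimate fails on a whole interval $(2,2+\delta)$. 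Moreover, the tools you cite cannot repair this: to exploit the negative term $-\tfrac{\pi}{2}\operatorname{v}_p(1/(2j))$ you need a \emph{lower} bound on $\cos_{p'}$, not $\cos_{p'}\le 1$, and even with the exact values the inequality is violated at $p=2$.

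The defect is structural: replacing $\sin$ by its argument on the tail costs a fixed positive amount that does not shrink as $p\to 2^+$, whereas the target coefficient $\tfrac{\pi^2}{2}(p-2)$ does. The paper keeps the sine intact, majorising $\sin(j\pi\operatorname{z}_p(y))$ by $\sin(j\pi\operatorname{r}_p(y))$ with $\operatorname{r}_p(y)=\tfrac1\pi\arcsin[(y/r)^{1/(r-1)}]$ (valid since both phases lie in $[0,\pi/2]$ there), and then substitutes $t=j\pi\operatorname{r}_p(y)$. The Jacobian $\mathrm{d}y/\mathrm{d}t$ carries the factor $r(1-r)$ explicitly, so the crucial smallness $(1-r)=2-p'$ near $p=2$ appears automatically rather than through a delicate cancellation; the remaining factors $\tfrac{\sin t}{t}\le 1$ and $\tfrac{t/j}{\sin(t/j)}\le\tfrac\pi2$ then give $\operatorname{J}_2<\tfrac{j\pi}{2}(1-r)\sin(\tfrac{\pi}{2j})^{r}$ and hence the stated constant. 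To salvage your route you would need to replace $\sin\theta\le\theta$ by a bound that is exact to first order at $\theta=\pi/2$ as well as at $0$, which essentially forces you back to the paper's comparison.
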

\begin{proof}
Since $p>2$, then $1<p'<2$. Let $r=p'-1$. In view of Lemma~\ref{fig}, we have
\[
    \operatorname{v}_p(t)\leq r \left[\sin_{p'}(\pi_{p'} t)\right]^{r-1}\leq r \big[\sin(\pi t)\big]^{r-1}
\]
and so
\begin{align}\label{equation42}
     \operatorname{z}_p(y) \leq \frac1\pi \arcsin\left[\left(
     \frac yr\right)^{\frac{1}{r-1}}\right]=:\operatorname{r}_p(y)
     \qquad \forall y\in[r,\infty).
\end{align}
Set
\[
    \eta(j):=r\sin\left(\frac{\pi}{2j} \right)^{r-1}.
\]
Then,
\[
    \operatorname{r}_p(\eta(j))=\frac{1}{2j}<\frac12.
\]
Here we use the requirement $j\geq 3$, in order to make sure that the arc-sine does not change branches.

Set
\[
    \operatorname{J}_1=\int_0^{\eta(j)}\mathrm{d} x
    = \eta(j)
\]
and
\[
    \operatorname{J}_2=\int_{\eta(j)}^\infty 
    \sin\left(j\pi\operatorname{r}_p(y)
    \right) \mathrm{d} y .
\]
Then,  \eqref{equation41} yields
\[
     |b_j|\leq \frac{4\pi_{p'}}{j^2\pi^2}(
     \operatorname{J}_1+\operatorname{J}_2).
\]
Here $\operatorname{J}_2$ is guaranteed to be on the right hand side,
because 
\[
     0<j\pi\operatorname{z}_p(y)\leq j\pi\operatorname{z}_p(\eta(j))\leq j\pi\operatorname{r}_p(\eta(j))=
     \frac \pi 2,
\]
so that $0<\sin(j\pi \operatorname{z}_p(y))\leq \sin(j\pi \operatorname{r}_p(y))$ for $y \in [\eta(j), \infty)$.

Let us estimate an upper bound for $\operatorname{J}_2$.
Changing variables to
\[
     t=j\pi\operatorname{r}_p(y) \Longleftrightarrow y=r\sin\left(\frac{t}{j}\right)^{r-1}
\]
gives
\begin{align*}
    \operatorname{J}_2&=\int_0^{\frac{\pi}{2}}
    \frac{r(1-r)}{j}\sin \left(\frac{t}{j}\right)^{r-2}
    \cos\left(\frac{t}{j}\right)\sin(t)\mathrm{d} t \\
    & = r(1-r)\int_0^{\frac{\pi}{2}}
    \sin \left(\frac{t}{j}\right)^{r-1}
    \left[\frac{\frac{t}{j}}{\sin \left(\frac{t}{j}\right)}   \right]
    \left(\frac{\sin t}{t}\right) 
    \cos\left(\frac{t}{j}\right)\mathrm{d} t
    .
\end{align*}
Note that,
\begin{align}\label{equation61}
    \max_{0< \theta \leq \frac \pi 2}\frac{\theta}{\sin \theta}=\frac
    \pi 2, \qquad
    \max_{0< \theta \leq \frac \pi 2}\frac{\sin \theta}{\theta}=1
\end{align}
and
\begin{align*}
0< t <j \pi\operatorname{r}_p(\eta(j)) = \frac\pi 2. 
\end{align*}
Here we are using once again the fact that $j\geq 3$. Then
\[
\operatorname{J}_2< \frac{\pi}{2}r(1-r)
\int_0^{\frac{\pi}{2}}
    \sin \left(\frac{t}{j}\right)^{r-1}
    \cos\left(\frac{t}{j}\right)\mathrm{d} t .
\]
Changing variables to
\[
    \tau=\sin \left(\frac{t}{j}\right),
\]
yields
\[
    \operatorname{J}_2 < \frac{j \pi}{2}r(1-r)\int_{0}^{\sin \frac{\pi}{2j}} \tau^{r-1} \mathrm{d} \tau=
    \frac{j\pi}{2}(1-r)\sin\left(\frac{\pi}{2j}\right)^r.
\]

Then
\[
    |b_j| < \frac{2\pi_{p'}}{j^2\pi^2}
    \left[2+\frac{j\pi(1-r)}{r}\sin\left(\frac{\pi}{2j}\right)\right]\eta(j).
\]
According to~\eqref{equation61},
we get
\[
    \eta(j)\leq rj^{1-r}
\]
and
\begin{align}\label{equation26}
   |b_j| < \frac{2\pi_{p'}r}{j^2\pi^2}
    \left[2+\frac{j\pi(1-r)}{r}\frac{\pi}{2j}\right]j^{1-r} .
\end{align}
Simplifying the expression on the right hand side, ensures the validity of the lemma.
\end{proof}


\section{The change of coordinates map} \label{sec4}

We now derive various properties of the change of coordinates maps that take the $2$-cosine functions into the $p$-cosine functions. Most of the material in this section can also be found in \cite{BBCDG2006}, \cite{BushellEdmunds2012}, \cite{EdmundsGurkaLang2014} and \cite{BL2014}. We keep a self-contained presentation here by including details of the main arguments.

Given any $g\in L_s$, denote the even extension of $g$ with respect to $1$ by
\[
 \tilde{g}(x)=\left\{\begin{array}{ll} g(x) & x \in [0, 1]\\
                          g(2-x)& x \in (1, 2].
                          \end{array}\right.
\]
A $2-$periodic extension of $g$ to the whole of $\mathbb{R}$ is then written as
\[
      g^*(x)=\tilde{g}(x-2\left\lfloor \frac x2 \right\rfloor). 
\]
The floor function $\lfloor y \rfloor \in \mathbb{Z}$ is the unique integer such that $y-\lfloor y \rfloor \in [0, 1)$.
For any $n\in \mathbb{N}$, let
\[
 M_ng(x):=g^*(nx).
\]

\begin{mylem}\label{parsnip}
The operators $M_n:L_s \longrightarrow L_s$ are linear isometries.
 \end{mylem}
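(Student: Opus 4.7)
The plan is to handle linearity and the isometry property separately, with the latter being the only substantive part.

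Linearity is immediate: the even reflection $g\mapsto \tilde g$, the $2$-periodic extension $\tilde g \mapsto g^*$, and the dilation $y\mapsto g^*(ny)$ are each linear operations on functions, so their composition $M_n$ is linear.

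For the isometry, I would first establish two symmetry facts about the extension $g^*$. By construction $g^*$ is $2$-periodic, and the even reflection around $x=1$ built into $\tilde g$ makes $g^*$ symmetric about $x=1$; combined with $2$-periodicity this propagates to symmetry about every integer. As a consequence, on each unit segment $[k,k+1]$ with $k\in\mathbb{Z}_{\ge 0}$, the function $g^*$ is either a translated copy of $g$ (on segments $[2k,2k+1]$) or a reflected translated copy (on $[2k+1,2k+2]$). In either case, an obvious linear change of variables gives
\[
\int_k^{k+1}|g^*(y)|^s\,\mathrm{d}y=\int_0^1|g(x)|^s\,\mathrm{d}x
\qquad \forall k\in\mathbb{Z}_{\ge 0}.
\]
Summing over $k=0,\dots,n-1$ yields
\[
\int_0^n|g^*(y)|^s\,\mathrm{d}y=n\int_0^1|g(x)|^s\,\mathrm{d}x.
\]

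Finally, I would compute $\|M_n g\|_{L_s}^s$ by the substitution $y=nx$:
\[
\|M_n g\|_{L_s}^s=\int_0^1|g^*(nx)|^s\,\mathrm{d}x=\frac{1}{n}\int_0^n|g^*(y)|^s\,\mathrm{d}y=\int_0^1|g(x)|^s\,\mathrm{d}x=\|g\|_{L_s}^s,
\]
which is the isometry property. The only genuine point of care is that $n$ need not be even, so the symmetric-about-every-integer observation (rather than just $2$-periodicity alone) is essential in order to split $[0,n]$ into $n$ unit pieces and see that each contributes the same value; this is the main obstacle, though a very mild one.
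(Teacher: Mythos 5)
Your proposal is correct and follows essentially the same route as the paper: substitute $y=nx$, split $[0,n]$ into unit intervals, and show each contributes $\|g\|_{L_s}^s$. The paper carries this out by explicit changes of variables keyed to the parity of the integer $l$ (using the floor-function formula for $g^*$), whereas you phrase the same computation via the symmetry of $g^*$ about every integer; the content is identical.
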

\begin{proof} Indeed,
\begin{align*}
     \Vert M_n g\Vert_{L_s}^s&=\int_0^1 \vert M_n g(x) \vert^s \mathrm{d}x=\int_0^1 \vert g^*(nx)\vert^s \mathrm{d}x=\int_0^1 \vert \tilde g(nx-2\left\lfloor \frac{nx}{2}\right\rfloor) \vert^s \mathrm{d}x\\
                        &=\frac1n\int_0^n \vert \tilde g(y-2\left\lfloor \frac{y}{2}\right\rfloor) \vert^s \mathrm{d}y=\frac1n  \sum_{l=0}^{n-1} \int_l^{l+1} \vert \tilde g(y-2\left\lfloor \frac{y}{2}\right\rfloor) \vert^s \mathrm{d}y\\
                        &=\frac{1}{n} \left[ \sum_{\substack{
   l=0\\
   l\equiv_2 0
  }}^{n-1} \int_l^{l+1}  \vert \tilde g(y-2\left\lfloor \frac{y}{2}\right\rfloor) \vert^s \mathrm{d}y+\sum_{\substack{
   l=1\\
   l\equiv_2 1
  }}^{n-1} \int_l^{l+1}  \vert \tilde g(y-2\left\lfloor \frac{y}{2}\right\rfloor) \vert^s \mathrm{d}y \right].
  \end{align*}
Changing variables to $w=y-l$ for $l\equiv_2 0$ and $z=y-(l-1)$ for $l\equiv_2 1$, gives
\begin{gather*}
\left\lfloor \frac{y}{2}\right\rfloor=\begin{cases}\frac{l}{2} & \text{whenever } l\equiv_2 0 \\
\frac{l-1}{2}& \text{whenever } l\equiv_2 1.\end{cases}
\end{gather*}
Hence,
\begin{align*}
  \Vert M_n g\Vert_{L_s}^s&=\frac{1}{n} \left[ \sum_{\substack{
   l=0\\
   l\equiv_2 0
  }}^{n-1} \int_0^1  \vert g(w) \vert^s \mathrm{d}w+\sum_{\substack{
   l=1\\
   l\equiv_2 1
  }}^{n-1} \int_1^2  \vert \tilde g(z) \vert^s \mathrm{d}z\right].
\end{align*}
Another change of variables $z=2-w$, then yields
\[
 \Vert M_ng \Vert_{L_s}^s=\frac{1}{n}\left[n \int_0^1 \vert g(w)\vert^s \mathrm{d}w\right]=\Vert g \Vert_{L_s}^s
\]
as claimed.
\end{proof}
Let
$
    e_n(x):=\cos(n\pi x). 
$
If 
\[
      g=\frac{\widehat{g}(0)}{2} e_0+\sum_{j=1}^\infty \widehat{g}(j) e_j\in L_s    
\]
where 
\begin{align*}
\widehat{g}(k):=2\int_0^1 g(x) e_k(x) \mathrm{d}x \qquad \forall k\in \mathbb{N}\cup\{0\}
\end{align*}
are the corresponding cosine Fourier coefficients, 
then
\[
      M_n g=\frac{\widehat{g}(0)}{2} e_0+\sum_{j=1}^\infty \widehat{g}(j) M_n e_j
              =\frac{\widehat{g}(0)}{2} e_0+\sum_{j=1}^\infty \widehat{g}(j) e_{nj}\in L_s.
\]

Now, let $f_n(x):=\cos_p(n\pi_p  x)$. Note that $e_0(x)=f_0(x)=1$ for all $x\in \mathbb{R}$.
Suitable linear extensions of the map $A:e_n\mapsto f_n$ are the changes of coordinates between $\{e_n\}_{n=0}^\infty$ and $\{f_n\}_{n=0}^\infty$. Our next goal is to find a canonical decomposition for $A$ 
in terms of $M_n$ and the Fourier coefficients  $b_n(p)$. After that, we show that these are bounded operators of the Banach spaces $L_s$ for all $s>1$.  

\begin{pro} \label{sumpowerone}
For all $p>1$,
\[
   \sum_{j=1}^\infty |b_j(p)|<\infty.
\]
\end{pro}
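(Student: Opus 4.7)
The plan is to split into the three cases $p=2$, $1<p<2$, and $p>2$, and apply the term-wise bounds already established. Recall from the preamble that $b_j(p)=0$ whenever $j\equiv_2 0$, so only odd indices contribute to the series.

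The case $p=2$ is trivial: then $\cos_p(\pi_p\,\cdot)=\cos(\pi\,\cdot)$, hence $b_1(2)=1$ and $b_j(2)=0$ for all $j\neq 1$, so the series has only one non-zero term.

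For $1<p<2$, I would apply Lemma~\ref{fourcoeffboundless2} directly. Since $c_p$ and $\pi_p$ are finite constants depending only on $p$, that lemma gives $|b_j(p)| < \frac{8\pi_p c_p}{\pi^2}\, j^{-2}$, and the series $\sum_{j\geq 1} j^{-2}$ converges. Thus
\[
\sum_{j=1}^\infty |b_j(p)| \leq \frac{8\pi_p c_p}{\pi^2}\sum_{j=1}^\infty j^{-2}<\infty.
\]

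For $p>2$, I would apply Lemma~\ref{tomato}. Writing $p'=p/(p-1)\in(1,2)$, that lemma provides a bound of the form $|b_j(p)|< C(p)\,j^{-p'}$ for all $j\geq 3$, with $C(p)$ a finite constant depending only on $p$. Since $p'>1$, the tail $\sum_{j\geq 3} j^{-p'}$ converges by the $p$-series test. The three omitted terms $|b_1(p)|$ and $|b_2(p)|$ (and $|b_0(p)|$ if one wishes to include it) are finite individual quantities (in fact, $|b_j(p)|\leq 2\int_0^1 |\cos_p(\pi_p x)|\,\mathrm{d}x\leq 2$), so adding them to the convergent tail yields the conclusion.

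No real obstacle arises here: both difficult estimates have been done in Section~\ref{sec3}. The only mild subtlety is remembering that Lemma~\ref{tomato} is only valid for $j\geq 3$ (because the proof there used $j\geq 3$ to keep $\arcsin$ on its principal branch), so the first few terms must be absorbed as a finite additive constant rather than estimated asymptotically. Once that is noted, the proof reduces to invoking convergence of $\sum j^{-2}$ and of $\sum j^{-p'}$ with $p'>1$.
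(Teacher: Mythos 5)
Your proposal is correct and is essentially the paper's own argument: the authors likewise dispose of the proposition by citing Lemma~\ref{fourcoeffboundless2} (the $j^{-2}$ bound for $1<p<2$) and Lemma~\ref{tomato} (the $j^{-p'}$ bound with $p'>1$ for $p>2$), exactly as summed in \eqref{equation14} and \eqref{equation19}. Your extra care with the trivially convergent case $p=2$ and with the finitely many terms $j<3$ excluded from Lemma~\ref{tomato} is sound and only makes explicit what the paper leaves implicit.
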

\begin{proof}
This is a direct consequence of lemmas~\ref{fourcoeffboundless2} and \ref{tomato}. See \eqref{equation14} and \eqref{equation19} below.
\end{proof}

In the notation of Section~\ref{sec3}, we have $\widehat{f}_1(k)=b_k(p)$ 
for all $k\in \mathbb{N}\cup\{0\}$. Recall that $b_k=0$ for $k\equiv_2 0$. Since any of the functions $f_n(x)$ is continuous, then they all have a Fourier cosine expansion
\begin{align*}
f_n(x)=\frac12 \widehat{f}_n(0)e_0(x)+\sum_{k=1}^\infty \widehat{f}_n(k) e_k(x)
\end{align*}
which is both pointwise convergent for all $x\in[0,1]$ and also convergent in the norm of $L_s$ for all $s>1$. Then, for all $n>1$,
\begin{align*}
\widehat{f}_n(k)&=2\int_0^1f_1(nx) \cos(k\pi x)\mathrm{d}x\\
&=2\int_0^1\Big(\sum_{m=1}^\infty \widehat{f}_1(m) \cos(m\pi nx)\Big)\cos(k\pi x) \mathrm{d}x \\
&=2\sum_{m=1}^\infty \widehat{f}_1(m)\int_0^1\cos(mn\pi x)\cos(k\pi x)\mathrm{d}x\\
&=\begin{cases} b_m(p)&\text{for } mn=k,\ m\equiv_2 1\\
0 & \text{otherwise}. \end{cases}
\end{align*}
Here we can exchange the infinite summation with the integral sign, due to 
the pointwise convergence of the series, Proposition~\ref{sumpowerone} and the Dominated Convergence theorem.

Let 
\begin{align}\label{equation7}
 A:=\sum_{j=1}^\infty b_j(p) M_j.
\end{align}
By virtue of Proposition~\ref{sumpowerone}, Lemma~\ref{parsnip} and the triangle inequality, it follows that the expression \eqref{equation7} is convergent in the operator norm of $L_s$ and that $A:L_s\longrightarrow L_s$ is a bounded linear operator such that
\begin{align*}
\Vert A \Vert_{L_s\longrightarrow L_s} \leq \sum_{j=1}^\infty \vert b_j \vert \Vert M_j \Vert_{L_s\longrightarrow L_s}=\sum_{j=1}^\infty \vert b_j \vert.
\end{align*}
Moreover,
\[
    Ae_0=\sum_{j=1}^\infty b_j M_j e_0=\sum_{j=1}^\infty b_j e_0=
    \sum_{j=1}^\infty b_j e_j(0)=\cos_p(\pi_p 0)=1=f_0
\]
and
\begin{align*}
 Ae_n=\sum_{j=1}^\infty b_j M_je_n=\sum_{j=1}^\infty \widehat{f}_1(j) e_{nj}=\sum_{k=1}^\infty \widehat{f}_n(k) e_k=f_n \qquad \forall n\in \mathbb{N}.
\end{align*}
These are the change of basis maps between $\{e_n\}_{n=0}^\infty$ and 
$\{f_n\}_{n=0}^\infty$.

The operator $A$ is an homeomorphism of $L_s$ if and only if the family $\{\cos_p(n\pi_p  \cdot)\}_{n=0}^\infty$ is a Schauder basis of $L_s$,
cf. \cite{Higgins1977} or \cite{Singer1970}. Then we have the following criterion, which is a consequence of \cite[Theorem IV-1.16]{Kato1967},
\begin{equation}   \label{herb} 
 \sum_{\substack{
   j=3\\
   j\equiv_2 1
  }}^\infty \vert b_j(p) \vert<\vert b_1(p) \vert
  \quad \Rightarrow \quad  \begin{cases}
  \{\cos_p(n\pi_p  \cdot)\}_{n=0}^\infty \text{ is a Schauder} \\\text{basis of } L_s \text{ for all }s >1.
\end{cases}
\end{equation}
We employ this criterion below in order to determine the basis thresholds for the family $\{\cos_p(n\pi_p \cdot)\}_{n=0}^\infty$
claimed in Theorem~\ref{melon}.


\section{Proof of Theorem~\ref{melon}}
 \label{sec5}
 
The proof is separated into two cases. 

\subsection{The case $1<p<2$}
Recall the expression for $c_p$ given in \eqref{defcp} and
consider the identity
\begin{equation}   \label{equcasep<2}
\pi_{p}^2 c_{p}=\frac{\pi^3}{\pi^2-8}.
\end{equation}

\begin{mylem}\label{lemma1} There exists $1<p_0<2$ such that
\eqref{equcasep<2} holds true for $p=p_0$. Moreover, 
\[
\pi_{p}^2c_p < \frac{\pi^3}{\pi^2-8} \qquad \forall p\in(p_0,2).
\]
\end{mylem}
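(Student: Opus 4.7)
The plan is to analyse the continuous function $g(p):=\pi_p^2 c_p$ on the interval $(1,2)$ and apply the intermediate value theorem, together with a maximality argument for the second assertion.

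First I would record the boundary behaviour of $g$. The map $p\mapsto \pi_p=2\pi/(p\sin(\pi/p))$ is smooth on $(1,\infty)$, with $\pi_p\to\infty$ as $p\to 1^+$ and $\pi_2=\pi$. For $c_p=(p-1)^{(p-1)/p}(2-p)^{(2-p)/p}$, the elementary fact $t^t\to 1$ as $t\to 0^+$ shows that both factors extend continuously to the endpoints of $[1,2]$, giving $\lim_{p\to 1^+}c_p=1$ and $\lim_{p\to 2^-}c_p=1$. Hence $g$ is continuous on $(1,2)$, satisfies
\[
 \lim_{p\to 1^+} g(p)=+\infty \qquad \text{and} \qquad \lim_{p\to 2^-} g(p)=\pi^2.
\]

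Second, I would compare the limiting value at $p=2$ with the constant $\pi^3/(\pi^2-8)$. The inequality $\pi^2<\pi^3/(\pi^2-8)$ is equivalent to $\pi^2-\pi<8$, which holds since $\pi^2-\pi\approx 6.73<8$. Combining this with the blow-up at $p=1^+$, the intermediate value theorem guarantees at least one $p\in(1,2)$ at which $g(p)=\pi^3/(\pi^2-8)$.

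Third, I would define $p_0$ as the largest root of \eqref{equcasep<2} in $(1,2)$, i.e.\ $p_0:=\sup\{p\in(1,2):g(p)=\pi^3/(\pi^2-8)\}$. Continuity of $g$ ensures $p_0\in(1,2)$ and that $g(p_0)=\pi^3/(\pi^2-8)$. The strict inequality on $(p_0,2)$ is then immediate from maximality: if there were $p^*\in(p_0,2)$ with $g(p^*)\geq\pi^3/(\pi^2-8)$, then, because $\lim_{p\to 2^-}g(p)=\pi^2<\pi^3/(\pi^2-8)$, applying the intermediate value theorem on $[p^*,2)$ would furnish another root of $g=\pi^3/(\pi^2-8)$ strictly larger than $p_0$, contradicting the definition of $p_0$.

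The only mildly delicate point is verifying that $c_p$ is bounded away from zero near the endpoints so that $g$ truly behaves as described; this reduces to the standard limit $t^t\to 1$ at $t=0^+$, so no serious obstacle arises. Everything else is a straightforward IVT argument and a numerical comparison of $\pi^2$ with $\pi^3/(\pi^2-8)$.
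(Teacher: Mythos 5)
Your argument is correct as a proof of the lemma as literally stated, but it takes a genuinely different route from the paper. You obtain existence by the intermediate value theorem from the endpoint limits $\pi_p^2c_p\to\infty$ ($p\to1^+$) and $\pi_p^2c_p\to\pi^2<\pi^3/(\pi^2-8)$ ($p\to2^-$), and then force the inequality on $(p_0,2)$ by \emph{defining} $p_0$ as the largest root; the paper instead proves that $p\mapsto\pi_p^2c_p$ is (log-)convex on $(1,2)$ by showing that $\ln\pi_p^2$, $\ln(p-1)^{(p-1)/p}$ and $\ln(2-p)^{(2-p)/p}$ are each convex, and then reads off both conclusions from convexity plus the same two endpoint limits. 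Your approach is more elementary and avoids all the second-derivative computations, but it buys strictly less: convexity gives \emph{uniqueness} of the root, whereas your $p_0$ is only characterised as a supremum. This matters immediately after the lemma, where the paper deduces $\frac43<p_0<\frac32$ from the two evaluations $\pi_{4/3}^2c_{4/3}>\pi^3/(\pi^2-8)$ and $\pi_{3/2}^2c_{3/2}<\pi^3/(\pi^2-8)$; with your definition the first evaluation still forces $p_0>\frac43$, but the second does not force $p_0<\frac32$, since a priori there could be another root beyond $\frac32$ with the function dipping below the threshold in between. So if you want Theorem~\ref{melon}'s claim $p_0<\frac32$ (and the Newton-method remark identifying $p_0\approx1.4588$), you would still need to supply uniqueness of the root — e.g.\ via the paper's convexity argument or some monotonicity statement — on top of your IVT construction.
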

\begin{proof}
It will be enough to prove that 
$\pi_p^2 c_p$ is a convex function of the parameter $p$ for all $1<p<2$. Indeed, since
\[
\lim_{p \rightarrow 1^+}\pi_p^2 c_p = \infty \qquad \text{and} \qquad
 \lim_{p \rightarrow 2^-}\pi_p^2 c_p=\pi^2< \frac{\pi^3}{\pi^2-8}, 
\] 
both statements will immediately follow from this property.

Firstly note that
\[
 \frac{\mathrm{d}}{\mathrm{d}p}\ln(p-1)^\frac{p-1}{p}=\frac{1}{p^2}\ln(p-1)+\frac{1}{p}
\]
and
\begin{align*}
 \frac{\mathrm{d}^2}{\mathrm{d}p^2}\ln(p-1)^\frac{p-1}{p}=\frac{2-p}{p^2(p-1)}-2\frac{\ln(p-1)}{p^3}>0.
\end{align*}
Then $\ln(p-1)^\frac{p-1}{p}$ is convex for $1<p<2$. 

Similarly, we have
\[
 \frac{\mathrm{d}}{\mathrm{d}p}\ln(2-p)^\frac{2-p}{p}=\frac{-2}{p^2}\ln(2-p)-\frac{1}{p}
\]
and
\begin{align*}
 \frac{\mathrm{d}^2}{\mathrm{d}p^2}\ln(2-p)^\frac{2-p}{p}=\frac{4-p}{p^2(2-p)}+4\frac{\ln(2-p)}{p^3}>0.
\end{align*}
Then, also $\ln(2-p)^\frac{2-p}{p}$ is convex for $1<p<2$.

Furthermore,
\[
 \frac{\mathrm{d}}{\mathrm{d}p}[\ln \pi_p]= \frac{\pi\cot(\frac{\pi}{p})}{p^2}-\frac{1}{p}
\]
and
\[
\frac{\mathrm{d^2}}{\mathrm{d}p^2}\ln \pi_p=\frac{(p^2+\pi^2)}{p^4}-\frac{2 \pi}{p^3}\cot\Big(\frac{\pi}{p}\Big)+\frac{\pi^2}{p^4} \cot^2\Big(\frac{\pi}{p}\Big)>0.
\]
The latter is a consequence of the fact that $\cos \frac{\pi}{p}<0$ and $\sin\frac{\pi}{p}>0$. Hence, also $\ln \pi_p^2$ is convex for $1<p<2$. 

The convexity of  the logarithm of each one of the multiplying terms in the expression for $\pi_p^2 c_p$, implies that $\ln \pi_p^2 c_p$ is convex for $1<p<2$.  This ensures that indeed  $\pi_p^2 c_p$ is convex in the same segment and the validity of the statement is ensured. 
\end{proof}

\begin{cor} \label{apple} Let $1<p_0<2$ be such that \eqref{equcasep<2} holds true for $p=p_0$. The family  $\{\cos_p(n\pi_p  \cdot)\}_{n=0}^\infty$ is a Schauder basis of $L_s$ for all $s>1$ and $p_0\leq p \leq 2$.
\end{cor}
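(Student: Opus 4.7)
The plan is to verify the criterion \eqref{herb} for every $p \in [p_0, 2]$, from which the Schauder basis property follows directly. The case $p = 2$ is immediate since $\cos_2(\pi x) = \cos(\pi x)$ gives $b_1(2) = 1$ and $b_j(2) = 0$ for all odd $j \geq 3$. So the real work is confined to $p \in [p_0, 2)$, where I plan to estimate the two sides of \eqref{herb} independently.

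For the tail, applying Lemma~\ref{fourcoeffboundless2} term-by-term and using the standard identity $\sum_{j\geq 1,\, j\equiv_2 1} j^{-2} = \pi^2/8$ yields
\[
\sum_{\substack{j=3 \\ j\equiv_2 1}}^\infty |b_j(p)| < \frac{8 \pi_p c_p}{\pi^2}\left(\frac{\pi^2}{8}-1\right) = \frac{\pi_p c_p(\pi^2-8)}{\pi^2}.
\]
Lemma~\ref{lemma1} then controls the right-hand side: it gives $\pi_p^2 c_p \leq \pi^3/(\pi^2-8)$ throughout $[p_0, 2)$, so the tail is bounded above by $\pi/\pi_p$. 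The inequality remains strict at the endpoint $p_0$ because the pointwise bound in Lemma~\ref{fourcoeffboundless2} is already strict for each individual $j$.

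For $|b_1(p)|$, I would combine Lemma~\ref{plum} with Lemma~\ref{fig}. By Lemma~\ref{plum}, $b_1(p) = (\pi/\pi_p)\, a_1(p)$ with $a_1(p) = 2\int_0^1 \sin_p(\pi_p x) \sin(\pi x)\,\mathrm{d}x$. Lemma~\ref{fig}-\ref{figa} gives $\sin_p(\pi_p x) \geq \sin(\pi x)$ on $[0, 1/2]$ whenever $p \leq 2$, and the common symmetry of $\sin_p(\pi_p \cdot)$ and $\sin(\pi \cdot)$ about $x = 1/2$ extends this inequality to the whole of $[0,1]$. Hence $a_1(p) \geq 2\int_0^1 \sin(\pi x)^2\,\mathrm{d}x = 1$, which gives $|b_1(p)| \geq \pi/\pi_p$. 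Chaining the two estimates,
\[
\sum_{\substack{j=3 \\ j\equiv_2 1}}^\infty |b_j(p)| < \frac{\pi}{\pi_p} \leq |b_1(p)|,
\]
and \eqref{herb} applies.

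No step is truly difficult, because all the analytic heavy lifting has already been packaged into Lemmas~\ref{fourcoeffboundless2}, \ref{plum}, \ref{fig} and \ref{lemma1}. The conceptual content is simply the observation that the threshold equation \eqref{equcasep<2} has been calibrated to the precise value at which the Fourier-tail bound produced by Lemma~\ref{fourcoeffboundless2} equals the natural lower bound $\pi/\pi_p$ for $|b_1(p)|$ delivered by the sine comparison. The only mild care point is tracking strictness at $p = p_0$, where the equality in \eqref{equcasep<2} would otherwise threaten to spoil \eqref{herb}; this is resolved by the strict term-by-term inequality in Lemma~\ref{fourcoeffboundless2}.
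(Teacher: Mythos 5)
Your proposal is correct and follows essentially the same route as the paper: the tail is bounded via Lemma~\ref{fourcoeffboundless2} and the odd-index sum $\sum_{j\geq 3,\ j\equiv_2 1} j^{-2}=\frac{\pi^2}{8}-1$, the lower bound $b_1(p)\geq \pi/\pi_p$ comes from Lemma~\ref{plum} together with Lemma~\ref{fig}-\ref{figa}, and Lemma~\ref{lemma1} closes the gap, with the strictness at $p=p_0$ supplied by the strict inequality in Lemma~\ref{fourcoeffboundless2} exactly as in the paper. Your explicit separate treatment of $p=2$ is a harmless presentational addition.
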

\begin{proof}
According to Lemma~\ref{fourcoeffboundless2},
\begin{equation}\label{equation14}
\sum_{\substack{
   j=3\\
   j\equiv_2 1
  }}^\infty \vert b_j(p) \vert
	<\frac{8\pi_pc_p}{\pi^2}\sum_{\substack{
   j =3\\
   j\equiv_2 1
  }}^\infty \frac{1}{j^2}
	=\frac{\pi_p^2 c_p(\pi^2-8)}{\pi^2 \pi_p}.
\end{equation}
On the other hand, in view of Lemma~\ref{plum} and Lemma~\ref{fig}-\ref{figa}, we have
\begin{align*}
 b_1(p) &=\frac{\pi}{\pi_p}a_1=\frac{4\pi}{\pi_p}\int_0^\frac{1}{2}\sin_p(\pi_px)\sin(\pi x)\mathrm{d}x\\
&\geq \frac{4\pi}{\pi_p}\int_0^\frac{1}{2} \sin(\pi x)^2\mathrm{d}x=\frac{\pi}{\pi_p}.
\end{align*}
Then, Lemma~\ref{lemma1} yields
\[
\sum_{\substack{
   j=3\\
   j\equiv_2 1
  }}^\infty \vert b_j(p) \vert <b_1(p)
\]
for all $p \in [p_0, 2)$. By virtue of \eqref{herb} the claimed conclusion follows.
\end{proof}

Since
\[
 \pi_{\frac43}^2 c_{\frac43}=\frac{\pi^2 3^{\frac54}\sqrt{2}}{2}>\frac{\pi^3}{\pi^2-8}
\]
and
\[
 \pi_{\frac32}^2 c_{\frac32}=\frac{64\pi^2 }{27\sqrt[3]{4}}<\frac{\pi^3}{\pi^2-8},
\]
then $\frac43<p_0<\frac32$. This settles the proof of Theorem~\ref{melon} for $1<p<2$. 

\begin{myrem}An implementation of the Newton method gives $p_0 \approx 1.458801$ as an approximated solution of \eqref{equcasep<2} with all digits correct.\end{myrem}

\subsection{Case $p>2$}
Recall the following identities involving the Riemann Zeta function \cite[3.411, 9.522 \& 9.524]{Grad2007},
\begin{equation} \label{intrepzeta}
\zeta(q)=\frac{1}{\Gamma(q)}\int_0^\infty \frac{t^{q-1}}{e^t-1} \mathrm{d}t \qquad \qquad  \mathrm{Re}(q)>1,
\end{equation}
\begin{align}\label{equation63}
     \sum_{\substack{ j=1 \\ j\not \equiv_2 0}}^\infty
     \frac{1}{j^{q}}=\left(1-\frac{1}{2^{q}} \right) \zeta(q)
\end{align}
and
\begin{align}\label{equation36}
\frac{\zeta'(q)}{\zeta(q)}=-\sum_{k=1}^\infty \frac{\Delta(k)}{k^q}
\end{align}
where
\[
    \Delta(k)=\begin{cases} \ln(r) & \text{if }k=r^m \text{ for some }r\text{ prime and }m\in \mathbb{N} \\
    0 &   \text{otherwise}.      \end{cases}
\]

\begin{mylem} \label{estzeta3/2}
Let 
\[
    t_0=\frac{2(e^2-3e+1)}{(e^2-2e-1)}.
\]
Then
\begin{equation} \label{c1}
    \zeta\left(\frac32\right)<\frac{2}{\sqrt{\pi}}\left(2\sqrt{2} \arctan \frac{1}{\sqrt{2}} +\frac{\pi^2}{6} + \frac{t_0^2}{4}-\frac{(t_0-1)^2}{2(e-1)^2}
    -\frac{t_0(e-2)+1}{e-1}\right).
\end{equation}
\end{mylem}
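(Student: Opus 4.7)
The plan is to apply the integral representation \eqref{intrepzeta} with $q=3/2$, which together with $\Gamma(3/2)=\sqrt{\pi}/2$ yields
\[
\zeta\!\left(\tfrac32\right) = \frac{2}{\sqrt{\pi}}\int_0^\infty \frac{\sqrt{t}}{e^t-1}\,\mathrm{d}t,
\]
and to split this integral at $t=1$. On $(0,1]$, the elementary Taylor bound $e^t-1\geq t+t^2/2 = t(t+2)/2$ (strict for $t>0$) gives $\sqrt{t}/(e^t-1) \leq 2/[\sqrt{t}(t+2)]$, and after the substitution $u=\sqrt{t}$ the latter integrates to $4\int_0^1 (u^2+2)^{-1}\mathrm{d}u = 2\sqrt{2}\arctan(1/\sqrt{2})$, producing the first term of \eqref{c1}. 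On $[1,\infty)$, the bound $\sqrt{t}\leq t$ combined with the identity $\int_0^\infty t/(e^t-1)\,\mathrm{d}t = \Gamma(2)\zeta(2) = \pi^2/6$ (again \eqref{intrepzeta}) gives
\[
\int_1^\infty \frac{\sqrt{t}}{e^t-1}\,\mathrm{d}t \leq \frac{\pi^2}{6} - J, \qquad J:=\int_0^1 \frac{t}{e^t-1}\,\mathrm{d}t.
\]
The task then reduces to producing a lower bound for $J$ that matches the remaining, purely rational, part of \eqref{c1}.

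For this I would exploit the convexity of $g(t):=t/(e^t-1)$, extended continuously with $g(0)=1$. A direct computation yields
\[
g''(t)=\frac{e^t\,\bigl[(t+2)-(2-t)e^t\bigr]}{(e^t-1)^3},
\]
and the numerator $H(t):=(t+2)-(2-t)e^t$ satisfies $H(0)=H'(0)=0$ together with $H''(t)=te^t\geq 0$ for $t\geq 0$, so two successive integrations give $H\geq 0$ on $[0,\infty)$, proving that $g$ is convex there. Evaluating $g$ and $g'$ at the endpoints of $[0,1]$ (via the Taylor expansion $g(t)=1-t/2+t^2/12+O(t^3)$ at $0$, and directly at $t=1$ where $g(1)=1/(e-1)$ and $g'(1)=-1/(e-1)^2$) then supplies the two tangent-line lower bounds
\[
g(t)\geq 1-\tfrac{t}{2} \qquad\text{and}\qquad g(t)\geq \frac{e-t}{(e-1)^2}, \qquad t\in[0,1].
\]
The sharpest piecewise-linear lower bound assembled from these is obtained by using whichever line is larger on each side of their unique crossing point $s^*\in(0,1)$, which satisfies $(1-s^*/2)(e-1)^2=e-s^*$; clearing denominators and rearranging gives $s^*=2(e^2-3e+1)/(e^2-2e-1)=t_0$.

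Integrating the resulting piecewise lower bound then produces
\[
J \geq \int_0^{t_0}\!\Big(1-\tfrac{t}{2}\Big)\,\mathrm{d}t + \int_{t_0}^{1}\!\frac{e-t}{(e-1)^2}\,\mathrm{d}t = t_0-\frac{t_0^2}{4}+\frac{(1-t_0)(2e-1-t_0)}{2(e-1)^2}.
\]
A short rearrangement whose only nontrivial step is the identity $2e-1-t_0 = (1-t_0)+2(e-1)$ converts the right-hand side into
\[
\frac{t_0(e-2)+1}{e-1}+\frac{(t_0-1)^2}{2(e-1)^2}-\frac{t_0^2}{4}.
\]
Combining the three estimates, multiplying by $2/\sqrt{\pi}$, and noting that the Taylor estimate on $(0,1]$ was strict yields \eqref{c1}. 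The main obstacles are establishing convexity of $g$ on $[0,\infty)$ via the three-derivative analysis of $H$, and recognising the tabulated $t_0$ as precisely the crossing point of the two tangent lines; the rest is routine integration.
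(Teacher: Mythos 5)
Your proposal is correct and follows essentially the same route as the paper: split the integral representation at $t=1$, bound the first piece via $e^t-1>t+t^2/2$, reduce the second piece to $\pi^2/6-\int_0^1 t/(e^t-1)\,\mathrm{d}t$, and lower-bound that last integral by the piecewise-linear function built from the tangent lines at $t=0$ and $t=1$, which meet precisely at $t_0$. You in fact supply two details the paper leaves implicit — the verification that $g''\geq0$ via $H(t)=(t+2)-(2-t)e^t$, and the identification of $t_0$ as the crossing point of the two tangents — so nothing further is needed.
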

\begin{proof}
Since $\Gamma(1+\frac12)=\frac{\sqrt{\pi}}{2}1!!=\frac{\sqrt{\pi}}{2}$, the representation \eqref{intrepzeta} gives
\begin{align*} 
    \zeta\left(\frac32\right)&=\frac{2}{\sqrt{\pi}}\int_{0}^\infty
    \frac{t^{1/2}}{e^t-1} \mathrm{d} t\\
    &=\frac{2}{\sqrt{\pi}}\left(\int_{0}^1+\int_{1}^\infty
    \frac{t^{1/2}}{e^t-1} \mathrm{d} t\right)=\frac{2}{\sqrt{\pi}}(\mathrm{J}_1+\mathrm{J}_2).
\end{align*}
We estimate separately upper bounds for $\mathrm{J}_1$ and $\mathrm{J}_2$.

The change of variables $t=u^2$, yields
\begin{align*}
\mathrm{J}_1&=\int_0^1 \frac{t^{1/2}}{e^t-1} \mathrm{d} t 
< \int_0^1 \frac{t^{1/2}}{t+\frac{t^2}{2}} \mathrm{d} t \\
&=\int_0^1 \frac{2u^2}{u^2+\frac{u^4}{2}} \mathrm{d} u=
2\sqrt{2} \arctan \frac{1}{\sqrt{2}}.
\end{align*}

On the other hand, we know that $\zeta(2)=\int_0^\infty \frac{t}{e^t-1}\mathrm{d}t=\frac{\pi^2}{6}$, so
\[
    \mathrm{J}_2\leq \int_1^\infty \frac{t}{e^t-1} \mathrm{d} t
    =\frac{\pi^2}{6}-\int_0^1 \frac{t}{e^t-1} \mathrm{d} t.
\]
We find lower bound for the integral on the right hand side, by interpolating the curve $c(t)=\frac{t}{e^t-1}$ at two points, $t=0$ and $t=1$. Firstly observe that $c(t)\to 1$ as $t\to 0$, $c(t)$ is decreasing  and $c''(t)\geq 0$ for $t\in [0,1]$. Let $t_0$ be as in the hypothesis and let
\[
     \tilde{c}(t)=\begin{cases} 1-\frac12 t & 0\leq t \leq t_0 \\
     \frac{1}{(e-1)^2}(1-t) +\frac{1}{e-1} & t_0\leq t \leq 1    \end{cases}
\]
be the piecewise linear interpolant of $c(t)$ in the two segments
$[0,t_0]$ and $[t_0,1]$, which is continuous at $t_0$. Note that $\tilde{c}(t)$ and $c(t)$ are tangent at $t=0$ and $t=1$. Then
\[
      c(t)\geq \tilde{c}(t) \qquad \qquad \forall t\in[0,1].
\]
Hence
\begin{align*}
\int_0^1 c(t) \mathrm{d} t & \geq 
\int_{0}^{t_0} \left(1-\frac12 t \right) \mathrm{d} t +
\int_{t_0}^1 \left(\frac{1}{(e-1)^2}(1-t) +\frac{1}{e-1}\right)
\mathrm{d} t \\
&= -\frac{t_0^2}{4} +\frac{(t_0-1)^2}{2(e-1)^2}+\frac{t_0(e-2)+1}{e-1}.
\end{align*}
Thus 
\[
    \mathrm{J}_2\leq \frac{\pi^2}{6}+ \frac{t_0^2}{4}-\frac{(t_0-1)^2}{2(e-1)^2}    -\frac{t_0(e-2)+1}{e-1}.
\]
Alongside with the upper bound above for $\mathrm{J}_1$, this ensures the validity of the claimed statement.
\end{proof}

Now, consider the equation
\begin{equation}\label{equation16}
 \frac{2\pi_{p'}}{\pi^2(p-1)}\left[2+\frac{\pi^2}{2}(p-2)\right] \left[\left(1-\frac{1}{2^{p'}}\right)\zeta(p')-1\right]=\frac{8}{\pi \pi_p}.
\end{equation}

\begin{mylem}\label{lemma2} 
There exists $p_1\in(\frac{11}{5},3)$ such that 
\eqref{equation16} holds true for $p=p_1$.
Moreover,
\[
 \frac{2\pi_{p'}}{\pi^2(p-1)}\left[2+\frac{\pi^2}{2}(p-2)\right] \left[\left(1-\frac{1}{2^{p'}}\right)\zeta(p')-1\right]<\frac{8}{\pi \pi_p} \qquad
\forall p\in[2,p_1).
\]
\end{mylem}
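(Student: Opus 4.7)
The plan is to turn \eqref{equation16} into a root-finding problem for a continuous function and apply the intermediate value theorem, following the spirit of Lemma~\ref{lemma1}. Define $\Phi:[2,3]\longrightarrow\mathbb{R}$ by letting $\Phi(p)$ denote the left-hand side of \eqref{equation16} minus its right-hand side. Since $\pi_p$, $\pi_{p'}$ and $\zeta(p')$ are smooth in $p>1$, the function $\Phi$ is continuous on $[2,3]$. A direct computation using $\pi_2=\pi$ and $(1-2^{-2})\zeta(2)=\pi^2/8$ gives
\[
\Phi(2)=\frac{\pi}{2}-\frac{4}{\pi}-\frac{8}{\pi^2}<0.
\]

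The crux is to establish the two endpoint inequalities $\Phi(11/5)<0$ and $\Phi(3)>0$. For $\Phi(3)>0$, with $p'=3/2$, the bracket $(1-2^{-3/2})\zeta(3/2)-1$ is positive, and a sufficiently long partial sum $\sum_{k=1}^{N}k^{-3/2}$ will furnish a lower bound on $\zeta(3/2)$ large enough to close the inequality. For $\Phi(11/5)<0$, with $p'=11/6$, the naive estimate $\zeta(11/6)\leq\zeta(3/2)$ combined with Lemma~\ref{estzeta3/2} proves too loose, so the plan is to exploit the convexity of $\zeta$ on $(1,\infty)$ (immediate from $\zeta''(s)=\sum(\ln n)^2 n^{-s}>0$) via the secant inequality
\[
\zeta(11/6)\leq \tfrac{1}{3}\zeta(3/2)+\tfrac{2}{3}\zeta(2),
\]
then insert the upper bound from Lemma~\ref{estzeta3/2} together with $\zeta(2)=\pi^2/6$. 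Substituting the resulting estimate into the left-hand side of \eqref{equation16} at $p=11/5$ yields a bound sharp enough to conclude $\Phi(11/5)<0$. The intermediate value theorem then produces a zero $p_1\in(11/5,3)$; take $p_1$ to be the smallest such zero in $[2,3]$.

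For the ``moreover'' clause one must exclude further zeros of $\Phi$ in $[2,p_1)$, so that $\Phi(2)<0$ propagates by continuity to the whole segment. The cleanest route is to show that $\Phi$ is strictly increasing on $[2,3]$: as $p$ grows, the dual exponent $p'=p/(p-1)$ shrinks, and with it both $\pi_{p'}$ and $\zeta(p')$ grow (the blow-up of $\zeta(p')$ as $p'\to 1^+$ dominating the decay of $1-2^{-p'}$), and the linear factor $2+\frac{\pi^2}{2}(p-2)$ grows as well; the right-hand side $\frac{8}{\pi\pi_p}$ also increases, but only mildly. Converting this heuristic into a rigorous derivative-sign computation, in the log-convex style of the proof of Lemma~\ref{lemma1}, yields $\Phi'>0$ on $[2,3]$, uniqueness of the zero, and the required propagation.

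The main obstacle will be the sharpness of the endpoint estimate $\Phi(11/5)<0$: a rough numerical check with the convexity secant bound places the left-hand side of \eqref{equation16} at around $0.87$ against a right-hand side of around $0.88$, so the explicit constants in Lemma~\ref{estzeta3/2} and in the evaluations of $\pi_{11/6}$ and $\pi_{11/5}$ will need to be kept under careful control.
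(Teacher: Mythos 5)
Your existence argument and both endpoint estimates coincide with the paper's: the same intermediate-value setup on $[2,3]$, a partial-sum lower bound for $\zeta(\frac32)$ at $p=3$, and at $p=\frac{11}{5}$ exactly the convexity secant bound $\zeta(\frac{11}{6})\leq\frac13\zeta(\frac32)+\frac23\zeta(2)$ fed by Lemma~\ref{estzeta3/2}. The gap is in the ``moreover'' step. You propose to prove that the \emph{difference} $\Phi=L-R$ of the two sides of \eqref{equation16} is strictly increasing on $[2,3]$. Two things go wrong there. First, both $L$ and $R$ are increasing, so $\Phi'>0$ amounts to the quantitative comparison $L'>R'$; your phrase ``the right-hand side also increases, but only mildly'' is precisely the estimate that would have to be proved, and the log-convexity device of Lemma~\ref{lemma1} does not apply to a difference (nor to the factor $(1-2^{-p'})\zeta(p')-1$, which is itself a difference and vanishes at $p=2$). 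The paper sidesteps this entirely by invoking the identity $p'\pi_{p'}=p\pi_p$ from \eqref{equation62}, which turns \eqref{equation16} into $h(p)=1$ with a \emph{constant} right-hand side; monotonicity of the single function $h$ then suffices, and $h<1$ on $[2,p_1)$ is exactly the ``moreover'' claim.

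Second, and more seriously, the monotonicity of $h$ (or of your $\Phi$) hinges on showing that $q\mapsto(1-2^{-q})\zeta(q)$ is \emph{decreasing} on $(1,2)$, so that composing with the decreasing map $p\mapsto p'$ makes that bracket increasing in $p$. This is not a soft statement: $\zeta(q)$ decreases but $1-2^{-q}$ increases, so one must show $\zeta'(q)/\zeta(q)<\ln 2/(1-2^{q})$. The paper obtains this from the Dirichlet series \eqref{equation36} for $\zeta'/\zeta$ by keeping the terms $k=2,3,4$. Your proposal asserts the conclusion (``the blow-up of $\zeta(p')$ \dots dominating the decay of $1-2^{-p'}$'') without an argument; as written this is the missing idea. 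A secondary loose end: taking $p_1$ to be ``the smallest zero in $[2,3]$'' is only consistent with the assertion $p_1>\frac{11}{5}$ once monotonicity is in place, since $\Phi(2)<0$ and $\Phi(\frac{11}{5})<0$ alone do not exclude a zero in $(2,\frac{11}{5})$.
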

\begin{proof}
From \eqref{equation62} it follows that the identity \eqref{equation16} reduces to
\begin{equation}   \label{sameeq16}
 \frac{\pi}{p^2\sin(\frac{\pi}{p})^2}\left(2+\frac{\pi^2}{2}(p-2)\right) \left[\left(1-\frac{1}{2^{p'}}\right)\zeta(p')-1\right]=1.
\end{equation}
Denote by $h(p)$ the left hand side of \eqref{sameeq16}. Then $h:(1,\infty)\longrightarrow \mathbb{R}$ is continuous and
\[
h(2)=\frac{\pi}{2}\left(\frac{\pi^2}{8}-1\right)<1.
\]
Since
\[
      \zeta\left(\frac32\right)>1+\frac{\sqrt{2}}{4}+\sqrt{3} \sum_{k=3}^\infty
     \frac{1}{k^2} =\frac{4+\sqrt{2}}{4}+\sqrt{3} \left(\frac{\pi^2}{6}-\frac54\right),
\]
we get
\begin{align*}
 h&(3)=\frac{\pi}{9\sin(\frac{\pi}{3})^2}\left[2+\frac{\pi^2}{2}\right] \left[\left(1-\frac{1}{2^{\frac32}}\right)\zeta\left(\frac32\right)-1\right]\\
&>\frac{\pi}{9\sin(\frac{\pi}{3})^2}\left[2+\frac{\pi^2}{2}\right] \left[\left(1-\frac{1}{2^{\frac32}}\right)\left( \frac{4+\sqrt{2}}{4}+\sqrt{3} \left(\frac{\pi^2}{6}-\frac54\right) \right)-1\right] \\
   &>1.
\end{align*}
Hence, there exists $p_1\in(2,3)$ such that $h(p_1)=1$.

The derivative
\[
\frac{\mathrm{d} }{\mathrm{d} q}\left[\left(1-\frac{1}{2^q}\right)\zeta(q)\right]=\frac{\ln(2)}{2^q}\zeta(q)+\left(1-\frac{1}{2^q}\right)\zeta'(q)
\]
is negative for any $q \in (1, 2)$. Indeed the identity \eqref{equation36} gives
\begin{align*}
\frac{\zeta'(q)}{\zeta(q)}&<-\frac{\ln(2)}{2^q}-\frac{\ln(3)}{3^q}-\frac{\ln(2)}{4^q}\\
&<-\ln(2)\left[\frac{1}{2^q}+\frac{1}{3^q}+\frac{1}{4^q}\right]<\frac{\ln(2)}{1-2^q },
\end{align*}
so that
\[
\frac{\mathrm{d} }{\mathrm{d} q}\left[\left(1-\frac{1}{2^q}\right)\zeta(q)\right]
=\zeta(q)\left[\frac{\ln(2)}{2^q}+\frac{2^q-1}{2^q}\frac{\zeta'(q)}{\zeta(q)}\right]<0.
\]
Since $p'$ and $\sin\big(\frac{\pi}{p}\big)$ are decreasing functions of $p>2$,
then
\[
 \frac{\pi}{\sin(\frac{\pi}{p})^2}\left[\left(1-\frac{1}{2^{p'}}\right)\zeta(p')-1\right]
\]
is an increasing function of $p>2$.

As
\[
 \frac{\mathrm{d} }{\mathrm{d} p}\left[\frac{1}{p^2}\Big(2+\frac{\pi^2}{2}
(p-2)\Big)\right]=\frac{1}{p^3}(-\frac{\pi^2}{2}p+2\pi^2-4)>0 \qquad
\forall p \in [2, 3],
\]
then $h(p)$ is increasing for $p \in [2, 3]$ and so indeed
\[
 h(p)<h(p_1)=1 \qquad \forall p \in [2, p_1).
\]

Let us now show that $p_1>\frac{11}{5}$. Let $c_1$ denote the right hand side of the estimate \eqref{c1} in Lemma~\ref{estzeta3/2}. Since $\zeta(q)$ is convex in the segment $[\frac32, 2]$, then
\[
     \zeta (q)\leq \left(\frac{\pi^2}{3}-2c_1\right)(q-2)+\frac{\pi^2}{6}.
\]
That is, the straight line joining the points $(\frac32,c_1)$ and $(2,\frac{\pi^2}{6})$ is above the curve $\zeta(q)$ for all $q\in [\frac32, 2]$. 
Then
\begin{equation}    \label{estforh1}
      \zeta\left(\frac{11}{6}\right)\leq \frac{\pi^2}{9}+\frac{c_1}{3}.
\end{equation}
Note that for $p=\frac{11}{5}$, $p'=\frac{11}{6}$. Now, $\sin(\pi y)$ is concave for
$y\in[\frac{5}{12},\frac12]$. Then it is above the straight line joining the points
$(\frac{5}{12},\sin \frac{5\pi}{12})$ and $(\frac12,1)$. That is  
\[
   \sin\left(\pi y\right) \geq \left(12-12 \sin\frac{5\pi}{12}\right)\left(y-\frac12\right)+1 \qquad \forall y\in\left[\frac{5}{12},\frac12\right]. 
\]
Then
\begin{equation}   \label{estforh2}
         \sin\frac{5\pi}{11} > \frac{  \sqrt{6}}{22} {\left(\sqrt{3} + 3\right)} + \frac{5}{11}.
\end{equation}
Denote by $c_2$ the right hand side of the latter inequality.
From \eqref{estforh1} and \eqref{estforh2}, it follows that
\begin{align*}
     h\left(\frac{11}{5}\right)& = \frac{\pi}{(\frac{11}{5})^2\sin(\frac{5\pi}{11})^2}\left[2+\frac{\pi^2}{2}\left(\frac{11}{5}-2\right)\right] \left[\left(1-\frac{1}{2^{11/6}}\right)\zeta\left(\frac{11}{6}\right)-1\right] \\
&< \frac{\pi}{\frac{121}{25}c_2^2}\left(2+\frac{\pi^2}{10}\right) \left[\left(1-\frac{1}{2^{11/6}}\right)\left(\frac{\pi^2}{9}+\frac{c_1}{3}\right)-1\right]<1.
\end{align*}
As $h(p)$ is increasing, then indeed $p_1>\frac{11}{5}$.
\end{proof}
\begin{cor}\label{berry}
 Let $p_1>2$ be such that \eqref{equation16} holds true for $p=p_1$.
The family $\{\cos_p(n\pi_p  \cdot \}_{n=0}^\infty$ forms a Schauder basis of $L_s$ for all $s>1$ and $2\leq p \leq p_1$.
\end{cor}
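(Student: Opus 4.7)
The plan is to mirror Corollary~\ref{apple} on the segment $[2,p_1]$: apply the criterion \eqref{herb} by controlling the tail sum $\sum_{j\geq 3,\,j\equiv_2 1}|b_j(p)|$ from above via Lemma~\ref{tomato} and Lemma~\ref{lemma2}, while bounding $b_1(p)$ from below by the matching quantity $\tfrac{8}{\pi\pi_p}$.

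For the tail estimate I would invoke Lemma~\ref{tomato} term by term and sum over odd $j\ge 3$. The identity \eqref{equation63} applied with $q=p'\in(1,2]$ gives $\sum_{j\equiv_2 1}j^{-p'}=(1-2^{-p'})\zeta(p')$; subtracting the $j=1$ contribution then yields
\begin{equation}\label{equation19}
\sum_{\substack{j=3\\ j\equiv_2 1}}^\infty |b_j(p)|<\frac{2\pi_{p'}}{\pi^2(p-1)}\left[2+\frac{\pi^2}{2}(p-2)\right]\left[\left(1-\frac{1}{2^{p'}}\right)\zeta(p')-1\right],
\end{equation}
which is precisely the left-hand side of the defining equation \eqref{equation16} for $p_1$.

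For the matching lower bound on $b_1(p)$ I would combine Lemma~\ref{plum} with concavity of $\sin_p$. From Lemma~\ref{basicpropcos}-\ref{lbcp2}, $\sin''_p(t)=-\sin_p(t)^{p-1}\cos_p(t)^{2-p}<0$ on $(0,\pi_p/2)$, so $x\mapsto\sin_p(\pi_p x)$ is concave on $[0,\tfrac12]$; since it equals $0$ at $x=0$ and $1$ at $x=\tfrac12$, it dominates the chord $x\mapsto 2x$. A routine integration then gives
\[
b_1(p)=\frac{4\pi}{\pi_p}\int_0^{\frac12}\sin_p(\pi_p x)\sin(\pi x)\,\mathrm{d}x\geq\frac{4\pi}{\pi_p}\int_0^{\frac12}2x\sin(\pi x)\,\mathrm{d}x=\frac{8}{\pi\pi_p},
\]
which is exactly the right-hand side of \eqref{equation16}.

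Combining these two estimates with Lemma~\ref{lemma2} yields $\sum_{j\ge 3,\,j\equiv_2 1}|b_j(p)|<\tfrac{8}{\pi\pi_p}\leq b_1(p)$ for every $p\in[2,p_1]$, where strictness at the endpoint $p=p_1$ is inherited from the strict inequality in Lemma~\ref{tomato}. The criterion \eqref{herb} then delivers the Schauder basis property for all $s>1$. The only delicate step is the calibration of the lower bound on $b_1(p)$: identifying the chord inequality $\sin_p(\pi_p x)\geq 2x$ is precisely what produces the constant $\tfrac{8}{\pi\pi_p}$ needed to match the right-hand side of \eqref{equation16}.
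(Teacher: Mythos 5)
Your proposal is correct and follows essentially the same route as the paper: the tail bound from Lemma~\ref{tomato} summed via \eqref{equation63}, the lower bound $b_1(p)>\frac{8}{\pi\pi_p}$ from concavity of $\sin_p(\pi_p\cdot)$ together with Lemma~\ref{plum}, and then Lemma~\ref{lemma2} and the criterion \eqref{herb}. Your handling of the endpoint $p=p_1$, via the strictness of the tail estimate, matches what the paper implicitly relies on.
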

\begin{proof}
From Lemma~\ref{tomato} and \eqref{equation63}, we have
\begin{align}\label{equation19}
   \sum_{\substack{
   j=3\\
   j\equiv_2 1
  }}^\infty |b_j|< \frac{2\pi_{p'}}{\pi^2(p-1)}\left[2+\frac{\pi^2}{2}(p-2)\right]\left[\left(1-\frac{1}{2^{p'}} \right) \zeta(p')-1\right].
\end{align}
According to part ``\ref{lbcp2}'' of Lemma~\ref{basicpropcos}, $\sin_p(\pi_p x)$ is strictly concave on $(0, \frac12)$. Then
\begin{align*}
 a_1&=2\int_0^1 \sin_p(\pi_p x) \sin(\pi x) \mathrm{d}x=4\int_0^\frac12 \sin_p(\pi_p x)\sin(\pi x) \mathrm{d}x  \\
&>4\int_0^\frac12 (2x) \sin(\pi x) \mathrm{d}x=\frac{8}{\pi^2}.
\end{align*}
Hence, in view of Lemma~\ref{plum}, we get
\begin{align}\label{equation25}
b_1=\frac{\pi}{\pi_p}  a_1 >\frac{8}{\pi \pi_p}.
\end{align}
From Lemma~\ref{lemma2}, it then follows that
\[
\sum_{\substack{
   j = 3\\
   j\equiv_2 1
  }}^\infty \vert b_j(p) \vert < b_1(p) \qquad \forall p\in[2,p_1].
\]
By virtue of \eqref{herb}
 this implies the claimed conclusion. 
\end{proof}

\begin{myrem}
An approximation of the solution of \eqref{equation16} via the Newton Method gives $p_1\approx 2.42865$ with all digits correct.
\end{myrem}


\section{Connections with other work} \label{sec6}
In this final section we describe various connections between the statements established above and those reported in the literature.   

\subsection*{The $p$-exponential functions}
Let
\[
      \exp_p(iy)=\cos_p(y)+i\sin_p(y) \qquad \qquad \forall y\in \mathbb{R}.
\]
By combining Theorem~\ref{melon} with  \cite[Theorem~1]{BBCDG2006}
or \cite[Theorem~4.5]{BushellEdmunds2012}, it immediately follows that
the family $\tilde{\mathcal{F}}=\{\exp_p(i n\pi_p\cdot)\}_{n=-\infty}^{\infty}$ is a Schauder basis of the Banach space $L_s(-1,1)$ for all $p\in [p_0,p_1]$.

Indeed, recall that every $f\in L^s(-1,1)$ decomposes as $f=f_{\mathrm{e}}+ f_{\mathrm{o}}$ for
\[
      f_{\mathrm{e}}(x)=\frac{f(x)+f(-x)}{2} \qquad\text{and} \qquad
      f_{\mathrm{o}}(x)=\frac{f(x)-f(-x)}{2},
\]
the even and odd parts of $f$, respectively. The family $\{\cos_p(n\pi_p\cdot)\}_{n=0}^\infty$ comprises only even functions, the family $\{\sin_p(n\pi_p\cdot)\}_{n=1}^\infty$  comprises only odd functions and they are Schauder bases of the corresponding subspaces of $L_s(-1,1)$ for $p\in [p_0,p_1]$. This implies that there exist two unique scalar sequences $(\alpha_k)_{k=0}^\infty$ and $(\beta_k)_{k=1}^\infty$, such that
\[
      f(\cdot)=\alpha_0+\sum_{k=1}^\infty \alpha_k \cos_p(k\pi_p\cdot)+i\beta_k\sin_p(k\pi_p\cdot)
\]  
in $L_s(-1,1)$. In order to see this, one expands $f_{\mathrm{e}}$ in $\{\cos_p(n\pi_p\cdot)\}_{n=0}^\infty$ and $f_{\mathrm{o}}$ in $\{\sin_p(n\pi_p\cdot)\}_{n=1}^\infty$, in the corresponding even and odd subspaces.

By letting $c_0=\alpha_0$, 
\[
      c_k=\frac{\alpha_k+\beta_k}{2} \qquad \text{and} \qquad
      c_{-k}=\frac{\alpha_k-\beta_k}{2} \qquad \forall k\in \mathbb{N},
\] 
we get
\[
   f(\cdot)=\sum_{k=-\infty}^\infty c_k \exp_p(i k\pi_p\cdot)
\]
in $L_s(-1,1)$. Since there is a 1:1 correspondence between the scalar sequences via
\[
      \alpha_k=c_k+c_{-k} \qquad \text{and} \qquad
      \beta_{k}=c_k-c_{-k},
\] 
then in fact $(c_k)_{k=-\infty}^\infty$ is unique for the given $f$. Thus, 
$\tilde{\mathcal{F}}$ satisfies the definition of a Schauder basis for the Banach space $L_s(-1,1)$.

\subsection*{The regularity of the $p$-sine functions}
Let $r> 0$ and denote by $H^r\equiv H^r(0,1)$ the (Hilbert) Sobolev space of order $r$. Let $1<p<2$. According to the formula \cite[(4.4)]{BushellEdmunds2012}, it follows that the Fourier coefficients of the $p$-sine function are such that
\begin{align*}
      \vert a_j(p) \vert \leq \frac{16 \pi_p^2c_p}{\pi^3}j^{-3} \qquad \forall j\in \mathbb{N}.
\end{align*}
Then, $\sin_p(\pi_p\cdot)\in H^{\rho}$ for all $\rho <\frac{5}{2}$.

Numerical estimates for the Sobolev regularity of $\sin_p(\pi_p\cdot)$ for $2<p<100$ were reported in \cite[Figure~2]{BL2010}. From that picture, one may conjecture that for $p>3$, $\sin_p(\pi_p\cdot) \notin H^2$. Moreover,  the regularity appears to drop asymptotically to $\frac32$ for $p$ large. By contrast,  it appears that $\sin_p(\pi_p\cdot) \in H^2$ for $2<p<3$. The following statement, which is a consequence of Lemma~\ref{tomato}, settles this conjecture.

\begin{cor}\label{raspberry}
For $p>2$ set $r(p)= p'+\frac1 2$. Then $\sin_p(\pi_p\cdot)\in H^{\rho}$ for all $0\leq \rho<r(p)$.
\end{cor}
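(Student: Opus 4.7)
The strategy is to convert the sharp decay of the cosine Fourier coefficients $b_j(p)$ given by Lemma~\ref{tomato} into decay for the sine Fourier coefficients $a_j(p)$ via Lemma~\ref{plum}, then invoke the Fourier characterisation of periodic Sobolev regularity. Concretely, for $p>2$ and $j\geq 3$,
\[
    |a_j(p)|=\frac{\pi_p}{j\pi}|b_j(p)|\leq \frac{C_p}{j^{p'+1}},
\]
with $C_p$ a constant depending only on $p$; the single term $j=1$ (the only other relevant odd index contribution to the series being zero for even $j$) is harmless.

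The next step is to note that $\sin_p(\pi_p\cdot)$ coincides, on all of $\mathbb{R}$, with the odd $2$-periodic extension of its restriction to $(0,1)$. This follows from the defining identities~\eqref{sineodd} together with $2\pi_p$-periodicity, which yield $\sin_p(\pi_p x)=-\sin_p(\pi_p(2-x))$ for $x\in(1,2)$; that is precisely the odd-$2$-periodic reflection rule. Consequently $(a_j(p))_{j\in\mathbb{N}}$ are the Fourier coefficients of $\sin_p(\pi_p\cdot)$ viewed as an element of $L^2(\mathbb{R}/2\mathbb{Z})$, and the standard equivalence $\|u\|_{H^\rho_{\mathrm{per}}}^2\asymp \sum_{j\geq 1}(1+j^2)^\rho|a_j|^2$ applies for every $\rho\geq 0$.

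Putting these pieces together,
\[
    \sum_{j=1}^\infty j^{2\rho}|a_j(p)|^2\leq |a_1(p)|^2+C_p^2\sum_{j=3}^\infty j^{2\rho-2p'-2},
\]
and the series on the right converges precisely when $2\rho-2p'-2<-1$, that is $\rho<p'+\tfrac12=r(p)$. This places $\sin_p(\pi_p\cdot)$ in $H^\rho_{\mathrm{per}}(\mathbb{R}/2\mathbb{Z})$, and hence, by the continuous restriction $H^\rho_{\mathrm{per}}\hookrightarrow H^\rho(0,1)$, in $H^\rho(0,1)$. The only real subtlety, and the one I would be most careful to spell out, is that $r(p)$ can exceed $1$ and even $2$ when $p$ is close to $2$; this is why I would frame the argument using the periodic Sobolev space throughout, rather than trying to work directly with $H^\rho(0,1)$ where boundary matching conditions for high $\rho$ would have to be checked by hand, as the equality between $\sin_p(\pi_p\cdot)$ and its odd $2$-periodic extension on all of $\mathbb{R}$ bypasses that issue cleanly.
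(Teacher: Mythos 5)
Your argument is correct and is essentially the paper's own proof: combine Lemma~\ref{plum} with the decay bound of Lemma~\ref{tomato} to get $|a_j(p)|\leq C_p j^{-(p'+1)}$ for $j\geq 3$, then sum $\langle j\rangle^{2\rho}|a_j(p)|^2$, which converges exactly for $\rho<p'+\frac12$. Your additional care in passing through the odd $2$-periodic extension and the periodic Sobolev space is a sensible refinement of a point the paper leaves implicit, but it does not change the route.
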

\begin{proof}
According to Lemma~\ref{plum}, 
\[
 \vert a_j(p) \vert = \frac{\pi_p}{j \pi} \vert b_j(p) \vert.
\]
Then, by virtue of Lemma~\ref{tomato},
\[
 \vert a_j(p) \vert \leq \frac{2\pi_p \pi_{p'}}{\pi^3 (p-1)}\left[2+\frac{\pi^2}{2}(p-2)\right] j^{-(p'+1)} \qquad \forall j\geq 3.
\]
Let $\langle j \rangle^2=1+j^2$. For $\rho< p'+\frac1 2$, 
\[
 \sum_{j=1}^\infty \langle j \rangle^{2\rho}\vert a_j(p) \vert^2 \leq 2^\rho a_1(p)^2+c(p) \sum_{\substack{
   j = 3\\
   j\equiv_2 1
  }}^\infty \frac{1}{j^{1+\epsilon(p)}}< \infty 
\]
where
\[
c(p)=\frac{2\pi_p \pi_{p'}}{\pi^3 (p-1)}\left[2+\frac{\pi^2}{2}(p-2)\right] \quad
\text{and} \quad \epsilon(p)=1-2\rho+2p'>0.
\]
Hence $\sin_p(\pi_p \cdot) \in H^\rho$ as claimed.
\end{proof}

The recent paper \cite{EdmundsGurkaLang2014-2} includes various intriguing results connected to Corollary~\ref{raspberry}.

\subsection*{The paper  \cite{EdmundsGurkaLang2014}}
The recent paper \cite{EdmundsGurkaLang2014} seems to be the only one in the existing literature which conducts an analysis of the basis properties of the $p$-cosine functions. In the notation of \cite{EdmundsGurkaLang2014} we fix $\alpha=1$ and $p=q>1$. The Fourier coefficients of the $p$-cosine functions are
\[
      \tau_j(p,p,1)=b_j(p) \qquad \forall j\in \mathbb{N}\cup\{0\}.
\]
The condition \cite[(2.2)]{EdmundsGurkaLang2014} as well as the criterion for determining whether $\{\cos_p(n\pi_p \cdot)\}_{n=0}^\infty$ is a Schauder basis of $L^s$ are exactly 
the same as \eqref{herb}. Let us compare some of the results of \cite{EdmundsGurkaLang2014} with those of the present work. 

In \cite[Proposition 2.5]{EdmundsGurkaLang2014}, the estimate  \cite[(2.20)]{EdmundsGurkaLang2014} is equivalent to the following. There exists $p_0^*=\frac{72(\pi-2)-2\pi^3}{96(\pi-2)-3\pi^3}$, such that
\begin{equation}  \label{equation18}
\tau_1(p, p, 1) \geq \begin{cases} 
\frac{\pi(p-1)}{2p-1}-\frac{(\pi-2)(p-1)}{3p-2} & 1<p<p_0^*   \\
\frac{\pi(p-1)}{2p-1}-\frac{\pi^3(p-1)}{24(4p-3)} & p_0^*<p<\infty.
\end{cases}
\end{equation}
Here $p_0^*$ satisfies the identity 
\[
 \frac{4p-3}{3p-2}=\frac{\pi^3}{24(\pi-2)}.
\]
Note that $p_0^*\approx 1.22$.

Let us consider firstly the regime $1<p<2$. 
From \cite[Proposition~2.2]{EdmundsGurkaLang2014} it follows that
\begin{equation}  \label{egl1}
    \sum_{k=1}^\infty|\tau_{2k+1}(p,p,1)| \leq \frac{\pi_p(\pi^2-8)}{\pi^2} \qquad \forall p\in(1,2).
\end{equation}
As  $c_p<1$ whenever $1<p<2$ in \eqref{defcp}, then \eqref{equation14} is sharper than \eqref{egl1} in this regime.

If $1<p<p_0^*$, then
\[
\frac{\pi_p(\pi^2-8)}{\pi^2}>\frac{\pi(p-1)}{2p-1}-\frac{(\pi-2)(p-1)}{3p-2},
\]
and no conclusion about the validity of \eqref{herb} can be derived in this case from \eqref{equation18} and \eqref{egl1}.
For $p_0^*<p<2$,  on the other hand,
\[
\frac{\pi_p(\pi^2-8)}{\pi^3}<\frac{p-1}{2p-1}-\frac{\pi^2(p-1)}{24(4p-3)}\quad \iff \quad p\in(p_0^\dagger,2),
\]
where $p_0^\dagger\approx 1.75$. In order to see this, note that $\pi_p$ is decreasing and $\lim_{p\to 1^{+}}\pi_p=\infty$, while the right hand side of this identity is increasing for $1<p<2$. Thus, a combination of 
\cite[Proposition~2.2]{EdmundsGurkaLang2014} and 
\cite[Proposition~2.5]{EdmundsGurkaLang2014}, only guarantees that 
$\{\cos_p(n\pi_p \cdot)\}_{n=0}^\infty$ is a Schauder basis of $L^s$ for  $p \in[p_0^\dagger, 2)$ where $p_0^\dagger>\frac32>p_0$.

As it turns, it is not possible to deduce from the results of \cite{EdmundsGurkaLang2014} any basis property of the family $\{\cos_p(n\pi_p \cdot)\}_{n=0}^\infty$ in the complementary regime $p>2$. Here is how the different estimates on the Fourier coefficients compare in this case.

From \cite[Proposition 2.4]{EdmundsGurkaLang2014}, we gather that 
\begin{equation} \label{egl2}
\sum_{k=1}^\infty \vert \tau_{2k+1}(p,p,1) \vert\leq \frac{2\pi_{p'}}{\pi^2(p-1)}\left[4+\pi(p-1)\right]\left[\left(1-\frac{1}{2^{p'}} \right) \zeta(p')-1\right].
\end{equation}
Since 
\[
       4+\pi(p-1)\geq 2+\frac{\pi^2}{2}(p-2) \qquad \forall p\leq
       \frac{4+2\pi^2-2\pi}{\pi^2-2\pi},
\]
the upper bound \eqref{equation19} is sharper than \eqref{egl2}
for $2\leq p \leq 3$. The latter is the relevant regime in the 
proof of Theorem~\ref{melon}. 
 
Since $\pi_p<\pi$ for $p>2$, the lower bound \eqref{equation25} is sharper than \cite[(2.19)]{EdmundsGurkaLang2014}. Moreover, 
\[
       \frac{8}{\pi \pi_p}> \frac{\pi(p-1)}{2p-1}-\frac{\pi^3(p-1)}{24(4p-3)} \qquad \qquad \forall p>2.
\]
Hence the estimate \eqref{equation18}, which is \cite[(2.20)]{EdmundsGurkaLang2014}, is also superseded by \eqref{equation25} for
$p>2$.


\section*{Acknowledgements}
HM was supported by Heriot-Watt University under the Global Platform Scholarships Scheme.

\subsection*{2010 Mathematics Subject Classification} Primary: 42A16. Secondary: 42A65. 

\subsection*{Keywords} Generalised cosine functions, Schauder basis properties, $p$-Laplacian.

\end{document}